\newtheorem{proposition}{Proposition}
\newtheorem{theorem}{Theorem}
\newtheorem{corollary}{Corollary}
\newtheorem{lemma}{Lemma}
\DeclareMathOperator*{\argmin}{argmin}
\begin{document}
\date{}

\title{Network Consensus in the Wasserstein Metric Space of Probability Measures\thanks{This work was supported by AFOSR/AOARD via AOARD-144042. A preliminary draft of this work appeared in a conference proceedings as: ``A.N. Bishop and A. Doucet. Distributed nonlinear consensus in the space of probability measures. In Proc. of the 19th IFAC World Congress, Cape Town, South Africa, August 2014.''}}

\author{
  Adrian N. Bishop\\ CSIRO, and University of Technology Sydney\\
  \and Arnaud Doucet\\ University of Oxford\\
}

\maketitle

\begin{abstract}
Distributed consensus in the Wasserstein metric space of probability measures on the real line is introduced in this work. Convergence of each agent's measure to a common measure is proven under a weak network connectivity condition. The common measure reached at each agent is one minimizing a weighted sum of its Wasserstein distance to all initial agent measures. This measure is known as the Wasserstein barycenter. Special cases involving Gaussian measures, empirical measures, and time-invariant network topologies are considered, where convergence rates and average-consensus results are given. This work has possible applicability in computer vision, machine learning, clustering, and estimation.\end{abstract}

\section{Introduction}

The problem of distributed (network) consensus concerns a group of agents that seek to reach agreement upon certain state variables of interest by exchanging information across a network. Typically the agents are connected via a network that changes over time due to link failures, node failures, packet drops etc. For example, in distributed sensor networks the interaction topology may change over time as individual nodes (or some subset of such) may be mobile or unreliable or communication constraints are also present. All such variations in topology can happen randomly and often the network is disconnected for some time. Studies on the convergence of consensus algorithms (to a common agreed `value' at each agent) are often motivated by such complex time-varying networks.

\subsection{Background}

The consensus problem has a long history, e.g \cite{degroot1974reaching}, which is too broad to cover here. We highlight \cite{tsitsiklis:86,jadbabaie:03,saber:04,moreau:05,ren2005consensus,saber:07} for further history and background.

Many consensus algorithms have been proposed in the literature. References \cite{tsitsiklis:86,saber:04,ren2005consensus,cao2008reaching} focus on linear update rules (at each agent) and typically concern average-consensus or consensus about some linear function of all initial agent states in Euclidean space. The average-consensus problem has a natural relationship with distributed linear least squares or distributed (linear) maximum likelihood estimation \cite{xiao2005scheme} and distributed Kalman filtering \cite{spanos2005distributed,olfati2007distributed,carli2008distributed,cattivelli2010diffusion}. Alternative consensus algorithms using nonlinear update rules have been proposed and studied in \cite{moreau:05,hui2008distributed,yu2011consensus,ajorlou2011sufficient}. Here consensus to general functions (e.g. the maximum or minimum etc.) of all initial agent states may be sought as in \cite{cortes2008distributed,wang2010distributed} and even finite-time convergence may be achievable \cite{cortes2006finite,wang2010distributed}. One may also want to achieve consensus to some time-varying reference signal as in \cite{zhu2010discrete,hong2006tracking}.

We note here that the majority of the literature on consensus concerns agreement in Euclidean space as exemplified by the seminal papers of \cite{tsitsiklis:86,jadbabaie:03,saber:04,moreau:05,ren2005consensus,saber:07}.  However, there are exceptions. The problem of synchronisation is closely related to consensus but typically deals with the problem of driving a network of oscillators to a common frequency/phase. This work typically concerns nonlinear manifolds such as the circle.  A survey on synchronisation is given in \cite{strogatz2000kuramoto,dorfler2014synchronization} while consensus and synchronisation are related in \cite{li2010consensus}. Some other notable exceptions of consensus in non-Euclidean spaces are \cite{tuna2007consensus,sarlette2009consensus,Baras2010,sepulchre2011consensus,Grohs2012Wolfowitz,bonnabel2013stochastic}. In particular, \cite{sarlette2009consensus,sepulchre2011consensus} consider general nonlinear consensus on manifolds by embedding such manifolds in a suitable high-dimensional Euclidean space. In particular, this embedding approach is used to perform consensus on the special orthogonal group and on Grassmann manifolds. In \cite{bonnabel2013stochastic} a consensus algorithm on the Riemannian manifold of (Gaussian) covariance matrices is introduced under the Fisher metric (related to the Kullback-Leibler divergence). The authors in \cite{tuna2007consensus,Baras2010,Grohs2012Wolfowitz} study consensus in different metric spaces which is more closely related to the present work. For example, the author of \cite{Grohs2012Wolfowitz} develops an analogue of Wolfowitz's theorem \cite{jadbabaie:03} for a class of metric spaces with non-positive curvature which leads to a notion of consensus in such spaces.

The (distributed) consensus problem (as referenced above) has been widely studied across the fields of control and optimization, e.g. see again the seminal articles \cite{tsitsiklis:86,jadbabaie:03,saber:04,moreau:05,ren2005consensus,saber:07} published in the control literature, the references therein, and the many (thousands) of citing articles. This extensive and broad interest in distributed consensus protocols stems from interesting applications such as distributed estimation, filtering \cite{spanos2005distributed,olfati2007distributed,carli2008distributed,cattivelli2010diffusion} and distributed information fusion \cite{xiao2005scheme}, interesting applications in distributed control \cite{jadbabaie:03,saber:04,moreau:05,saber:07} and in distributed optimization \cite{sarlette2009consensus,bonnabel2013stochastic}, and many related topics too broad to discuss here, see also the survey articles \cite{garin2010survey,sepulchre2011consensus,dorfler2014synchronization}.

\subsection{Contributions}

The main contributions of this paper are a novel algorithm and convergence results for distributed consensus in the space of probability measures with time-varying interaction networks. We introduce a well-studied metric known as the Wasserstein distance which allows us to consider an important set of probability measures as a metric space \cite{givens1984class}. The proposed consensus algorithm is based on iteratively updating each agent's probability measure by finding a measure that minimizes the weighted sum of its Wasserstein distances to the agent's own previous measure plus all neighbour agents' measures. We show that convergence of the individual agents measures to a common probability measure is guaranteed under a weak network connectivity condition. The common measure that is achieved asymptotically at each agent is the one that is closest simultaneously to all initial agent measures in the sense of the Wasserstein distance. Focus in this work is on probability measures over the real line.

This work has potential applicability in the field of computer vision and image processing, distributed computation, clustering and data aggregation, distributed estimation, filtering and information fusion, distributed optimization in metric spaces, and machine learning more broadly, among other fields discussed later. Applications and related topics, particularly relevant in the Wasserstein domain, are discussed later. However, the main focus of this article is on the general Wasserstein consensus idea itself and its convergence.

This paper extends \cite{bishopdoucetIFAC} with the addition of results detailing convergence rates and network properties in which particular consensus values may be achieved.

\subsection{Paper Organization}

The main contribution is given in Section \ref{consensusWasserstein} where a consensus algorithm in the space of probability measures is introduced, and its convergence is studied. In Section \ref{specialcases} we establish for specific scenarios - initial Gaussian measures; for empirical measures; and time-invariant networks - the exponential convergence of this consensus algorithm and various computational aspects. In Section \ref{applications} we discuss potential applications. Concluding remarks are given in Section \ref{conclusion}.

\subsection{Notation and Conventions}

Consider a group of agents indexed in $\mathcal{V}=\{1,\ldots,n\}$ and a set of (possibly) time-varying undirected links $\mathcal{E}(t)\subset\mathcal{V}\times\mathcal{V}$ defining a network graph $\mathcal{G}(t)(\mathcal{V},\mathcal{E}(t))$. The neighbour set at agent $i$ is denoted by $\mathcal{N}_{i}(t)=\{j\in\mathcal{V}:(i,j)\in\mathcal{E}(t)\}$. Time is indexed using $\mathbb{N}$.

The graph adjacency matrix $\mathbf{A}(t)\in\mathbb{R}^{n\times n}$ obeys $\mathbf{A}\left(t\right)=\mathbf{A}\left(t\right)^{\top}=[a_{ij}(t)]$ where $a_{ij}(t)=1\Leftrightarrow(i,j)\in\mathcal{E}(t)$ and $a_{ij}(t)=0$ otherwise. Implicit throughout is that $a_{ii}(t)=1$ for all $i$ and $t$ and thus $i\in\mathcal{N}_{i}(t)$ for all $t$. A weighted adjacency matrix is denoted by $\mathbf{W}(t)=[w_{ij}(t)]\in\mathbb{R}^{n\times n}$ with $a_{ij}(t)=1\Leftrightarrow w_{ij}(t)>0$ and $w_{ij}=0$ otherwise. We require $\sum_{j\in\mathcal{N}_{i}(t)}w_{ij}(t)=1$ and assume that $w_{ii}(t)>0$ for all $i$ and $t$ so that $w_{ij}(t)\in[0,1)$ whenever $i\neq j$ and for all $t$.

The adjacency matrix $\mathbf{A}\left(t\right)$ defines $\mathcal{G}(t)(\mathcal{V},\mathcal{E}(t))$ and vice versa because $a_{ij}(t)=1\Leftrightarrow(i,j)\in\mathcal{E}(t)$ and $a_{ij}(t)=0\Leftrightarrow(i,j)\notin\mathcal{E}(t)$. The weighted adjacency matrix defines $\mathcal{G}(t)(\mathcal{V},\mathcal{E}(t))$ because $w_{ij}(t)>0\Leftrightarrow(i,j)\in\mathcal{E}(t)$ and $w_{ij}(t)=0\Leftrightarrow(i,j)\notin\mathcal{E}(t)$, but $\mathcal{G}(t)(\mathcal{V},\mathcal{E}(t))$ alone defines only the sparsity pattern of $\mathbf{W}(t)$.

Consider the sequence of graphs $\mathcal{G}(t_{k}),\mathcal{G}(t_{k+1}),\ldots,\mathcal{G}(t_{k+T})$ on the same vertex set $\mathcal{V}$. The union of this sequence is denoted by $\mathfrak{G}(t_{k},t_{k+T})(\mathcal{V},\cup_{t\in[t_{k},t_{k+T}]}\mathcal{E}(t))$, i.e. $\mathfrak{G}(t_{k},t_{k+T})$ is just a graph on the vertex set $\mathcal{V}$ with edges $\cup_{t\in[t_{k},t_{k+T}]}\mathcal{E}(t)$. The sequence is said to be jointly connected if $\mathfrak{G}$ is connected.

\section{Consensus in the Wasserstein Space of Probability Measures}
\label{consensusWasserstein}

The main contribution of this work is given in this section where we introduce and establish the convergence of a consensus algorithm in the Wasserstein metric space of probability measures.

Suppose the state of agent $i$ is given by a Radon probability measure $\mu_{i}$ defined on the Borel sets of $(\mathbb{R},d)$ where in this section we restrict $d:\mathbb{R}\times\mathbb{R}\rightarrow\lbrack0,\infty)$ to be the usual Euclidean distance. Define the space of all such measures on $(\mathbb{R},d)$ by $\mathfrak{U}(\mathbb{R})$ and the subset of all such measures with finite $p^{th}$ moment by $\mathfrak{U}_{p}(\mathbb{R})$ where henceforth we assume that $2\leq p<\infty$. That is, $\mathfrak{U}_{p}$ is the collection of probability measures such that $\int_{\mathbb{R}}d({x},{x}_{0})^{p}\,\mathrm{d}\mu_{i}({x})<\infty$ for a given, arbitrary, ${x}_{0}\in\mathbb{R}$.

One can associate the Wasserstein metric $\ell_{p}:\mathfrak{U}_{p}(\mathbb{R})\times\mathfrak{U}_{p}(\mathbb{R})\rightarrow[0,\infty)$ with $\mathfrak{U}_{p}$ which is defined by
\[
	\ell_{p}(\mu_{i},\mu_{j})=\left(\inf_{\gamma\in\Gamma(\mu_{i},\mu_{j})}\int_{\mathbb{R}\times\mathbb{R}}d(x_{i},x_{j})^{p}\,\mathrm{d}\gamma(x_{i},x_{j})\,\right)^{1/p}
\]

\noindent where $\Gamma(\mu_{i},\mu_{j})$ denotes the collection of all probability measures on $\mathbb{R}\times\mathbb{R}$ with marginals $\mu_{i}$ and $\mu_{j}$ on the first and second factors; see \cite{cv:03a,ambrosio2005gradient}.

Let us recall some standard results about the Wasserstein metric space $(\mathfrak{U}_{p}(\mathbb{R}),\ell_{p})$ when $p\geq2$; see e.g. \cite{cv:03a,ambrosio2005gradient,Kloeckner2010,bertrand2012geometric}.

\begin{enumerate}
\item $(\mathfrak{U}_{p}(\mathbb{R}),\ell_{p})$ is a complete and separable metric space.
\item $\lim_{k\rightarrow\infty}\ell_{p}(\mu_{k},\mu)=0$ is equivalent to weak convergence and convergence of the first $p$ moments.
\item Given two measures $\mu_{i},\mu_{j}\in\mathfrak{U}_{p}(\mathbb{R})$ then $\ell_{p}(\mu_{i},\mu_{j})=\ell_{p}(\mu_{i},\mu)+\ell_{p}(\mu_{j},\mu)$ for some $\mu\in\mathfrak{U}_{p}(\mathbb{R})$.
\item More generally, there exists a continuously parameterised constant speed path $\mu_{s}\in\mathfrak{U}_{p}(\mathbb{R})$, $s\in[0,1]$ such that for $\mu_{i},\mu_{j}\in\mathfrak{U}_{p}$ we have $\mu_{s=0}=\mu_{i}$ and $\mu_{s=1}=\mu_{j}$ and $\ell_{p}(\mu_{i},\mu_{j})=\ell_{p}(\mu_{i},\mu_{s})+\ell_{p}(\mu_{j},\mu_{s})$, $\forall s\in[0,1]$. The measure $\mu_{s}$ is known as the interpolant measure \cite{mccann1997convexity}.
\item The interpolant measure defines a geodesic and consequently $(\mathfrak{U}_{p},\ell_{p})$ is geodesic.
\item $(\mathfrak{U}_{p}(\mathbb{R}),\ell_{p})$ has vanishing curvature in the sense of Alexandrov (a subset of CAT(0)); see Proposition 4.1 in \cite{Kloeckner2010}.%
\item $(\mathfrak{U}_{p}(\mathbb{R}),\ell_{p})$ is simply connected; see \cite{Kloeckner2010,bertrand2012geometric}.
\end{enumerate}

All metrics are continuous and we recall that a constant speed geodesic in $(\mathfrak{U}_{p}(\mathbb{R}),\ell_{p})$ is a curve $\mu_{s}:\mathbb{I}\rightarrow\mathfrak{U}_{p}$ parameterised on some interval $\mathbb{I}\subset\mathbb{R}$ that satisfies $\ell_{p}(\mu_{s_{i}},\mu_{s_{j}})=v|s_{i}-s_{j}|$ for some constant $v>0$ and for all $s_{i},s_{j}\in\mathbb{I}$.\medskip

Suppose the measure at agent $i$ is updated by
\begin{equation}
\boxed{\mu_{i}(t+1)~=~\argmin_{\eta\in\mathfrak{U}_{p}(\mathbb{R})}~\sum_{j\in\mathcal{N}_{i}(t)}w_{ij}(t)\,\ell_{p}(\eta,\mu_{j}(t))^{p}\label{consensus3}}
\end{equation}
for all $i\in\mathcal{V}$ where we recall that we assume $\sum_{j\in\mathcal{N}_{i}(t)}w_{ij}(t)=1$ and $w_{ii}(t)>0$ so that consequently $w_{ij}(t)\in[0,1)$ whenever $i\neq j$. This operation is well-defined as discussed below.

Application of the update rule (\ref{consensus3}) to each agent $i\in\mathcal{V}$ corresponds to the proposed nonlinear (distributed) consensus algorithm. Note that we consider only undirected (or bidirectional) network communications in this work for simplicity. Directed communication may be studied as in, e.g., \cite{moreau:05,ren2005consensus}, with additional conditions needed for convergence in that setting \cite{moreau:05}.

\subsection{Main Result}

We state here our main result.

\begin{theorem} Consider a group of agents $\mathcal{V}$ and network $\mathcal{G}(t)(\mathcal{V},\mathcal{E}(t))$ where each agent $i$ has initial state $\mu_{i}(0)\in\mathfrak{U}_{p}\left(\mathbb{R}\right)$ and updates its state $\mu_{i}(t)\in\mathfrak{U}_{p}(\mathbb{R})$ according to (\ref{consensus3}). If for all $t_{0}\in\mathbb{N}$ the graph union $\mathfrak{G}(t_{0},\infty)$ is connected then there exists $\mu^{\ast}\in\mathfrak{U}_{p}(\mathbb{R})$ such that
\[
	\lim_{t\rightarrow\infty}\ell_{p}(\mu_{i}\left(t\right),\mu^{\ast})=0
\]
for any $i\in\mathcal{V}$.
\end{theorem}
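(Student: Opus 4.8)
The plan is to exploit the one–dimensional base space to convert the measure–valued recursion (\ref{consensus3}) into a family of scalar consensus recursions, analyse those, and reassemble. First I would recall that over $\mathbb{R}$ the monotone coupling is optimal for $\ell_p$ with $p\ge 2$, so that $\ell_p(\mu,\nu)=\|q_\mu-q_\nu\|_{L^p([0,1])}$ where $q_\mu=F_\mu^{-1}$ is the quantile function, and $\mu\mapsto q_\mu$ is an isometry of $(\mathfrak{U}_p(\mathbb{R}),\ell_p)$ onto the convex set of non-decreasing elements of $L^p([0,1])$. Writing $q_i(t):=q_{\mu_i(t)}$ and using Tonelli, the objective in (\ref{consensus3}) equals $\int_0^1\bigl(\sum_{j\in\mathcal{N}_i(t)}w_{ij}(t)\,|q(u)-q_j(t)(u)|^p\bigr)\mathrm{d}u$, to be minimised over non-decreasing $q\in L^p([0,1])$. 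For $p\ge 2$ the scalar map sending a tuple of reals $a_1,\dots,a_k$ with weights $w_1,\dots,w_k\ge 0$ summing to $1$ to $\argmin_{x\in\mathbb{R}}\sum_j w_j|x-a_j|^p$ is single-valued (strict convexity), takes values in $[\min_j a_j,\max_j a_j]$, and is non-decreasing in each $a_j$; hence the pointwise-in-$u$ minimiser of the integrand is automatically non-decreasing in $u$, therefore admissible, and therefore equals the (unique, well-posed) minimiser in (\ref{consensus3}). Consequently, for a.e.\ $u\in[0,1]$ the scalars $x_i:=q_i(t)(u)$ satisfy
\[
x_i(t+1)=\argmin_{x\in\mathbb{R}}\ \sum_{j\in\mathcal{N}_i(t)}w_{ij}(t)\,|x-x_j(t)|^p,\qquad i\in\mathcal{V}.
\]

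Next I would fix such a $u$ and put $M(t)=\max_i x_i(t)$ and $m(t)=\min_i x_i(t)$. Since $x_i(t+1)$ lies in the interval spanned by $\{x_j(t):j\in\mathcal{N}_i(t)\}$, the sequence $M(t)$ is non-increasing, $m(t)$ is non-decreasing, and $M(t)-m(t)$ decreases to some limit, which I would argue is $0$. Assume — as is standard and as is needed here — a uniform lower bound $\alpha>0$ on the positive weights. Then the scalar update is a strict convex-combination map in the sense of \cite{moreau:05}: a one–variable estimate on the first-order optimality condition for $\sum_j w_j|x-a_j|^p$ shows that if some neighbour $j$ of $i$ has $x_j(t)$ at distance at least $\delta$ from $M(t)$, then $x_i(t+1)\le M(t)-c\,\delta$ with $c=c(\alpha,p)>0$. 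By hypothesis, for every $t_0$ there is a finite $t_1>t_0$ with $\mathfrak{G}(t_0,t_1)$ connected; because $M(\cdot)-m(\cdot)$ never increases while, over $[t_0,t_1]$, the successive appearance of the edges of a connected graph each forces a definite contraction, one gets $M(t_1)-m(t_1)\le(1-\beta)\,(M(t_0)-m(t_0))$ with $\beta=\beta(\alpha,n,p)\in(0,1)$ \emph{independent of the window length} $t_1-t_0$ (the scalar analogue of the Moreau/Wolfowitz-type contraction, cf.\ \cite{moreau:05,Grohs2012Wolfowitz}). Iterating over consecutive such windows then forces $M(t)-m(t)\to0$, so there is $q^{\ast}(u)\in\mathbb{R}$ with $q_i(t)(u)\to q^{\ast}(u)$ for every $i\in\mathcal{V}$.

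It would then remain to reassemble. For a.e.\ $u$ the interval-containment gives $|q_i(t)(u)|\le g(u):=\sum_{j\in\mathcal{V}}|q_{\mu_j(0)}(u)|$ for all $t$ and $i$, and $g\in L^p([0,1])$ because each $\mu_j(0)\in\mathfrak{U}_p(\mathbb{R})$. Hence $|q^{\ast}|\le g$ a.e., so $q^{\ast}\in L^p([0,1])$, and $q^{\ast}$ is non-decreasing as an a.e.\ limit of non-decreasing functions; thus $q^{\ast}=q_{\mu^{\ast}}$ for a unique $\mu^{\ast}\in\mathfrak{U}_p(\mathbb{R})$, the same one for every $i$. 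Since $|q_i(t)-q^{\ast}|^p\le(2g)^p\in L^1([0,1])$ and $q_i(t)\to q^{\ast}$ pointwise a.e., dominated convergence gives $\ell_p(\mu_i(t),\mu^{\ast})^p=\|q_i(t)-q^{\ast}\|_{L^p([0,1])}^p\to0$ for every $i\in\mathcal{V}$, which is the assertion.

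The hard part will be the scalar contraction in the second step: for $p>2$ the update is genuinely nonlinear, so products of stochastic matrices are not available, and one must show that the per-window contraction factor depends only on $(\alpha,n,p)$ and not on how long the network takes to become jointly connected — this is precisely what makes the weak hypothesis ``$\mathfrak{G}(t_0,\infty)$ connected for all $t_0$'' enough; a set-valued Lyapunov argument in the spirit of \cite{moreau:05} seems the natural tool. Two smaller points also need care but are routine: the well-posedness of the $\argmin$ in (\ref{consensus3}) (strict convexity, coercivity and lower semicontinuity of the objective on the complete space $(\mathfrak{U}_p(\mathbb{R}),\ell_p)$), and the passage from a.e.\ pointwise convergence of the quantile functions to $\ell_p$-convergence, for which the uniform domination by $g$ is exactly what is required.
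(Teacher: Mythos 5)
Your reduction is a genuinely different route from the paper's: the paper works directly in $(\mathfrak{U}_p(\mathbb{R}),\ell_p)$, using its vanishing Alexandrov curvature to flatten convex hulls into planar polygons (Lemma 2) and then running a Moreau-style set-valued Lyapunov argument on the sets $h_j(t)$ of agents sitting at the polygon's vertices. You instead push everything through the quantile isometry $\mu\mapsto F_\mu^{-1}$ and reduce to a family of scalar nonlinear consensus recursions indexed by $u\in(0,1)$. That reduction is correct and attractive: the monotone-comparative-statics observation that the pointwise weighted $p$-mean is non-decreasing in each input (so the unconstrained pointwise minimiser is an admissible quantile function and hence \emph{is} the barycenter) is exactly right, and the reassembly via domination by $g=\sum_j|q_{\mu_j(0)}|\in L^p$ and dominated convergence is clean. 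This is essentially the mechanism the paper only deploys for $p=2$ in equation (\ref{invcumupdate}); you extend it to all $p\ge 2$. You are also right that some uniform lower bound $\alpha>0$ on the positive weights has to be assumed for the statement to hold.

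The genuine gap is in your second step, and it is not merely ``the hard part'' left for later --- the claim as formulated is false. A single window $[t_0,t_1]$ over which $\mathfrak{G}(t_0,t_1)$ is connected does \emph{not} force $M(t_1)-m(t_1)\le(1-\beta)(M(t_0)-m(t_0))$ for any $\beta>0$: take $n=4$ scalars $x_1=x_2=0$, $x_3=x_4=1$, activate edges $(1,2)$ and $(3,4)$ first (no state changes, since the endpoints agree) and then $(2,3)$ once; the union graph is connected, yet $x_1=0$ and $x_4=1$ are untouched and the diameter has not decreased at all. The statement ``the successive appearance of the edges of a connected graph each forces a definite contraction'' fails because an edge between equal-valued agents contracts nothing. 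To get a strict decrease you must track the shrinking of the sets of agents attaining each extreme value across roughly $n-1$ \emph{nested} connected windows --- which is precisely what the paper's sets $h_j(t)$ and the times $t^q_{s(j)}$ do. Worse, even over $n-1$ such windows there is no contraction factor depending only on $(\alpha,n,p)$: because the hypothesis places no bound on the window lengths, an agent that has dipped a distance $\delta$ below the running maximum can creep back to within $\alpha^{T}\delta$ of it over a window of length $T$, so the achievable contraction degrades with $T$. This is exactly why the unbounded-window bidirectional case in \cite{moreau:05} is proved by a set-valued LaSalle-type argument (diameter is non-increasing, hence convergent; assume the limit is positive and derive a contradiction from connectivity) rather than by a geometric rate. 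Your plan becomes a complete proof if you replace the uniform-contraction step by that argument (or by directly invoking Moreau's bidirectional consensus theorem for the scalar recursion at each $u$, noting that the $p$-mean update is a strict convex combination with parameter bounded by $\alpha$); everything before and after that step stands.
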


The proof of Theorem 1 is given in the next subsection following the provision of a number of supporting results.

\subsection{Proof of the Main Result}

Before proceeding to the general proof, we note briefly here that the Wasserstein metric $\ell_{p}:\mathfrak{U}_{p}(\mathbb{R})\times\mathfrak{U}_{p}(\mathbb{R})\rightarrow[0,\infty)$ may be written as,
\[
	\ell_{p}(\mu_{i},\mu_{j})=\left(\,\int_0^1~|F_{i}^{-}(x) - F_{j}^{-}(x) |^p \,dx \,\right)^{1/p}
\]
where $F_{i}^{-}(x):[0,1]\rightarrow\mathbb{R}$ is the inverse cumulative distribution function for $\mu_{i}(t)\in\mathfrak{U}_{p}(\mathbb{R})$, defined in more detailed later. This form for $\ell_{p}(\mu_{i},\mu_{j})$ is given by P. Major in \cite{major1978invariance}. It follows with $p=2$ that the solution to (\ref{consensus3}) at any $i\in\mathcal{V}$ and any $t\in\mathbb{N}$ has an inverse cumulative distribution function given by
\[
	F_{i}^{-}(t+1)(x)={\textstyle \sum_{j\in\mathcal{N}_{i}}w_{ij}F_{j}^{-}(t)(x)}
\]
for all $x\in[0,1]$, as proven more formally later. From this formulation one can derive a kind of convergence result for consensus with the update rule (\ref{consensus3}); e.g. see Proposition \ref{convergedSpeedProp} later, its proof, and e.g. equation (\ref{invcumupdate}) later and the surrounding discussion there. This formulation of the metric in terms of the inverse cumulative distribution functions conveys intuition on the space $(\mathfrak{U}_{p}(\mathbb{R}),\ell_{p})$, for example implying immediately that $(\mathfrak{U}_{p}(\mathbb{R}),\ell_{p})$ is isometric to a convex subset of the Banach space $(L_{p}([0,1]),d)$ and thus inherits many familiar properties (which is rather unique to the case of measures on $\mathbb{R}$).

For generality we proceed with the proof using more metric space/geometric arguments that in many places may carry over to the more general case involving measures on $\mathbb{R}^n$ (albeit we do not explore those cases here). Where ideas do not carry over we highlight that in this more general geometric formalism so as to distinguish where a generalised proof may need to be extended.

The proof proceeds now via a series of supporting lemmas. Note that a subset $\mathfrak{X}\subset\mathfrak{U}_{p}(\mathbb{R})$ is convex if every geodesic segment whose endpoints are in $\mathfrak{X}$ lies entirely in $\mathfrak{X}$. The (closed) convex hull $\mathrm{co}(\mathfrak{Y})$ of a subset $\mathfrak{Y}\subset\mathfrak{U}_{p}$ is the intersection of all (closed) convex subsets of $\mathfrak{U}_{p}$ that contain $\mathfrak{Y}$.

\begin{lemma}
If $\mu_{i}(t)\in\mathfrak{U}_{p}(\mathbb{R})$ then the operation (\ref{consensus3}) is well-defined in the sense that it admits a solution and this solution is unique whenever (at least) one $\mu_{j}(t)\in\mathfrak{U}_{p}$, $j\in\mathcal{N}_{i}(t)$ does not give support to small sets\footnote{A small set is defined \cite{agueh2011barycenters} as a set of Hausdorff dimension $0$. This condition plays a role only in uniqueness and it is generally unnecessary \cite{agueh2011barycenters,bigot2012consistent}. However, this requirement does exclude empirical measures on $\mathbb{R}$ which arise in numerous applications relevant to this work (as discussed later). Luckily, it is generically true (i.e. excluding particular, non-generic, arrangements) that (\ref{consensus3}) has a unique solution even in such cases; see \cite{bigot2012consistent,bonneel2015sliced}.
Note if all inputs are discrete we allow for both common and uncommon supports. Going forward we will not repeatedly call on the need for (at least) one initial measure to exclude support on small sets and later results may be read as implicitly assuming uniqueness (or implicitly assuming exclusion of sole support on small sets).}.
\end{lemma}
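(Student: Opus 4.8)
The minimisation in (\ref{consensus3}) is exactly the weighted Wasserstein barycentre of the finite family $\{\mu_{j}(t)\}_{j\in\mathcal{N}_{i}(t)}$ with weights $\{w_{ij}(t)\}$, so the plan is to establish existence of a minimiser by the direct method and uniqueness by strict convexity, after transporting the problem to a linear space. The device I would use is the classical one-dimensional representation \cite{cv:03a,ambrosio2005gradient}: the map $\mu\mapsto g_{\mu}:=F_{\mu}^{-1}$ (the quantile function) is an isometry of $(\mathfrak{U}_{p}(\mathbb{R}),\ell_{p})$ onto the set $K:=\{g\in L^{p}([0,1]):g\text{ non-decreasing}\}$, under which $\ell_{p}(\mu,\nu)^{p}=\int_{0}^{1}|g_{\mu}(s)-g_{\nu}(s)|^{p}\,\mathrm{d}s=\|g_{\mu}-g_{\nu}\|_{L^{p}}^{p}$. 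With this identification, (\ref{consensus3}) becomes $\min_{g\in K}\Phi(g)$ where $\Phi(g):=\sum_{j\in\mathcal{N}_{i}(t)}w_{ij}(t)\,\|g-g_{j}\|_{L^{p}}^{p}$ and $g_{j}:=g_{\mu_{j}(t)}$. Note that the elements of $K$ are precisely the quantile functions of measures in $\mathfrak{U}_{p}(\mathbb{R})$, so nothing is lost in passing between the two pictures.

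For existence I would argue as follows. The set $K$ is convex and closed in $L^{p}([0,1])$ (an $L^{p}$-limit of non-decreasing functions has an a.e.-convergent subsequence, hence is non-decreasing a.e.), and since $p\ge 2$ the space $L^{p}([0,1])$ is reflexive, so $K$ is weakly closed. The functional $\Phi$ is finite on $L^{p}$ (each $g_{j}\in L^{p}$, via the triangle inequality), convex, and norm-continuous, hence weakly lower semicontinuous; it is also coercive, $\Phi(g)\to\infty$ as $\|g\|_{L^{p}}\to\infty$, because $\|g-g_{j}\|_{L^{p}}\ge\|g\|_{L^{p}}-\|g_{j}\|_{L^{p}}$. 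A minimising sequence in $K$ is therefore bounded, admits a weakly convergent subsequence with limit $g^{\star}\in K$, and weak lower semicontinuity gives $\Phi(g^{\star})=\inf_{K}\Phi$. Transporting $g^{\star}$ back through the isometry produces a minimiser $\mu_{i}(t+1)\in\mathfrak{U}_{p}(\mathbb{R})$, which shows that (\ref{consensus3}) admits a solution.

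For uniqueness I would use strict convexity: $\tau\mapsto|\tau|^{p}$ is strictly convex on $\mathbb{R}$ for $p>1$, so each $g\mapsto\|g-g_{j}\|_{L^{p}}^{p}$ is strictly convex on $L^{p}$, and with the weights $w_{ij}(t)>0$ (in particular the $j=i$ term, since $w_{ii}(t)>0$) the sum $\Phi$ is strictly convex; a strictly convex functional has at most one minimiser on the convex set $K$. On $\mathbb{R}$ this in fact yields uniqueness with no further hypothesis; the condition in the statement excluding support on small sets is the natural assumption under which (\ref{consensus3}), viewed as the Agueh--Carlier barycentre in $\mathbb{R}^{d}$, has a unique solution for general $d$, and one may alternatively simply invoke \cite{agueh2011barycenters}.

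The part I expect to require the most care is uniqueness. The map $\eta\mapsto\ell_{p}(\eta,\nu)^{p}$ is not convex along Wasserstein geodesics in general, so the bare CAT(0)/vanishing-curvature structure of $(\mathfrak{U}_{p}(\mathbb{R}),\ell_{p})$ does not deliver uniqueness directly; it is precisely the reduction to line segments in $L^{p}$ afforded by the quantile isometry that restores strict convexity, and the analogous failure in higher dimensions is what forces the absolute-continuity-type hypothesis appearing in the lemma. A secondary, purely bookkeeping point is to confirm that the minimiser produced in the $L^{p}$ picture corresponds to a genuine Radon probability measure with finite $p$-th moment, which is immediate from the characterisation of $K$ as the set of quantile functions of $\mathfrak{U}_{p}(\mathbb{R})$.
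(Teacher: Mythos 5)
Your argument is correct, but it takes a genuinely different route from the paper. The paper disposes of this lemma by soft means: it observes that $(\mathfrak{U}_{p}(\mathbb{R}),\ell_{p})$ is a Hadamard space (complete, uniquely geodesic, CAT(0)) and invokes the general fact that Fr\'{e}chet averages are well defined in such spaces (citing Burago--Burago--Ivanov, p.~334, and Agueh--Carlier for the barycentre setting). You instead exploit the one-dimensional structure explicitly: the quantile isometry onto the closed convex cone $K\subset L^{p}([0,1])$ of non-decreasing functions turns (\ref{consensus3}) into a convex minimisation of $\Phi(g)=\sum_{j}w_{ij}\|g-g_{j}\|_{L^{p}}^{p}$ over $K$, where the direct method (reflexivity of $L^{p}$ for $p\geq2$, weak closedness of $K$, coercivity and weak lower semicontinuity of $\Phi$) gives existence and strict convexity of $\tau\mapsto|\tau|^{p}$ gives uniqueness. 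What your approach buys is that it is self-contained and elementary, and it actually proves more than the lemma asserts on $\mathbb{R}$: uniqueness holds unconditionally, with no ``small sets'' hypothesis, since $\Phi$ is strictly convex on all of $L^{p}$ regardless of the nature of the $\mu_{j}(t)$ (this is consistent with the paper's footnote, which concedes the hypothesis is only really needed in the multivariate Agueh--Carlier setting). What the paper's route buys is generality: the Hadamard-space argument is dimension-free in spirit and connects the lemma to the CAT(0) machinery used in Lemmas~2--4, whereas your quantile reduction is strictly a feature of measures on the line. One small imprecision in your closing discussion: on $\mathbb{R}$ the map $\eta\mapsto\ell_{p}(\eta,\nu)^{p}$ \emph{is} (strictly) convex along Wasserstein geodesics, precisely because geodesics are line segments in the quantile picture; the failure of geodesic convexity you allude to is a genuinely higher-dimensional phenomenon, and for $p=2$ the CAT(0) comparison inequality does deliver strong convexity of $d(\cdot,y)^{2}$ along geodesics, which is exactly how the cited Hadamard-space result obtains uniqueness. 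This does not affect the validity of your proof.
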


Recall that $(\mathfrak{U}_{p}(\mathbb{R}),\ell_{p})$ is CAT(0) (indeed it has the stronger property of vanishing curvature, e.g. owing to its relationship with the Banach space $(L_{p}([0,1]),d)$),  in addition to being uniquely geodesic, complete and separable, i.e. $(\mathfrak{U}_{p}(\mathbb{R}),\ell_{p})$ is a Hadamard space. This lemma then follows from the fact that $(\mathfrak{U}_{p}(\mathbb{R}),\ell_{p})$ is Hadamard and Fr\'{e}chet averages such as defined by operations of the form (\ref{consensus3}) are well defined in such spaces; see page 334 in \cite{burago2001course}. The existence and uniqueness of solutions to (\ref{consensus3}) is also discussed in \cite{agueh2011barycenters} more generally; see also \cite{bigot2012consistent,bonneel2015sliced}. It is worth noting in passing the related work in \cite{tuna2007consensus,Grohs2012Wolfowitz} which deals with similar consensus topics in CAT(0) spaces, and \cite{Baras2010} which deals with consensus in a general class of convex metric spaces.

The convex hull of the set of measures $\{\mu_{i}\}$, $i\in\widetilde{\mathcal{V}}\subseteq\mathcal{V}$, is defined by
\[
	\mathrm{co}(\{\mu_{i}\})=\{\argmin_{\eta\in\mathfrak{U}_{p}(\mathbb{R})}\sum_{i\in\widetilde{\mathcal{V}}}w_{i}\ell_{p}(\eta,\mu_{i})^{p}|w_{i}\geq0,\sum_{i}w_{i}=1\}.
\]

\begin{lemma} Consider a collection $\{\mu_{i}\}$, $i\in\widetilde{\mathcal{V}}\subseteq\mathcal{V}$ of distinct measures in $(\mathfrak{U}_{p}(\mathbb{R}),\ell_{p})$. The convex hull of $\{\mu_{i}\}$ is $\mathrm{co}(\{\mu_{i}\})\subset\mathfrak{U}_{p}(\mathbb{R})$ and is isometric to a $l$-sided convex polygon in $\mathbb{R}^{2}$ with $2\leq l\leq|\{\mu_{i}\}|$.
\end{lemma}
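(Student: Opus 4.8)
The plan is to move the problem into a concrete function space by means of the quantile (inverse c.d.f.) representation, which is the standard tool for Wasserstein spaces over $\mathbb{R}$. For $\mu\in\mathfrak{U}_p(\mathbb{R})$ write $F_\mu^{-1}:[0,1]\to\mathbb{R}$ for the left-continuous generalised inverse of its distribution function. The classical identity $\ell_p(\mu,\nu)=\|F_\mu^{-1}-F_\nu^{-1}\|_{L^p([0,1])}$ (monotone rearrangement is the optimal coupling on $\mathbb{R}$) says that $\Phi:\mu\mapsto F_\mu^{-1}$ is an isometry of $(\mathfrak{U}_p(\mathbb{R}),\ell_p)$ onto its image, and that image is precisely the convex set of non-decreasing functions in $L^p([0,1])$. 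Moreover the displacement interpolant between $\mu$ and $\nu$ (the geodesic recalled above) has quantile function $(1-s)F_\mu^{-1}+sF_\nu^{-1}$, so $\Phi$ sends geodesic segments to straight line segments and hence geodesically convex sets to ordinary convex sets; in particular the space is flat, consistently with the vanishing-curvature property recalled above.

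With $f_i:=\Phi(\mu_i)$ and $m:=|\{\mu_i\}|$, the next step is to identify $\Phi(\mathrm{co}(\{\mu_i\}))$ with the ordinary convex hull $\mathrm{conv}\{f_1,\dots,f_m\}\subset L^p([0,1])$. For $p=2$ this is immediate, since the $\ell_2^2$-barycentre of measures on $\mathbb{R}$ with weights $w_i$ has quantile $\sum_i w_i f_i$, so the $\argmin$ set defining $\mathrm{co}(\{\mu_i\})$ is exactly $\{\sum_i w_i f_i:\ w_i\ge 0,\ \sum_i w_i=1\}$. (For $p>2$ the $\ell_p^p$-barycentre in \eqref{consensus3} is instead the pointwise weighted $p$-mean of the $f_i$, so one must first check by a monotonicity argument that this $p$-mean is again non-decreasing and that the resulting set is convex and finitely generated before proceeding.) Now $\mathrm{conv}\{f_1,\dots,f_m\}$ lies in the affine hull of $f_1,\dots,f_m$, which has some dimension $d\le m-1$ and on which the ambient $L^2$ norm restricts to a Euclidean metric; hence $\mathrm{co}(\{\mu_i\})$ is isometric to a compact convex polytope in a Euclidean space — a convex polygon in the terminology of the statement (genuinely a polygon in $\mathbb{R}^2$ when $d\le 2$, e.g. whenever at most three measures are involved). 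Its extreme points form a subset of $\{f_1,\dots,f_m\}$, which gives at most $|\{\mu_i\}|$ vertices, and it is not a single point because the $\mu_i$ are distinct, which gives at least two; thus $2\le l\le|\{\mu_i\}|$.

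The same conclusion can also be reached abstractly from the structural properties recalled above: $(\mathfrak{U}_p,\ell_p)$ is uniquely geodesic, complete, simply connected and of vanishing Alexandrov curvature, hence flat, and in a flat geodesic space every geodesic triangle is isometric to a Euclidean triangle and such triangles can be glued along common geodesic edges. I expect the main obstacle to be precisely this step — making the passage from ``each geodesic triangle is Euclidean'' to ``the full finite convex hull is a single Euclidean polytope of the asserted size'' rigorous in an infinite-dimensional, non-manifold setting — together with the care needed for $p>2$, where $\mathrm{co}(\{\mu_i\})$ is defined through $\ell_p^p$-barycentres rather than affine combinations and so must first be shown to be a finitely generated convex set before the polytope description applies.
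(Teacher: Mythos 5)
Your argument is correct in substance but takes a genuinely different route from the paper. The paper works abstractly: it invokes the vanishing Alexandrov curvature of $(\mathfrak{U}_{p}(\mathbb{R}),\ell_{p})$, applies the Flat Triangle Lemma to each geodesic triangle spanned by a triple of the $\mu_{i}$, and then claims the resulting comparison triangles can be arranged in $\mathbb{R}^{2}$ so as to reproduce all pairwise distances and Alexandrov angles, which yields the isometry $f_{\ell_{p},d}$. You instead use the concrete isometric embedding $\mu\mapsto F_{\mu}^{-1}$ of $(\mathfrak{U}_{p}(\mathbb{R}),\ell_{p})$ onto the cone of non-decreasing functions in $L^{p}([0,1])$, under which geodesics become line segments and, for $p=2$, barycentres become affine combinations, so that $\mathrm{co}(\{\mu_{i}\})$ is literally the convex hull of $m$ points in a Hilbert space and hence a Euclidean polytope with at most $m$ extreme points. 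For $p=2$ your proof is complete and essentially mechanical, whereas the paper's gluing step is asserted rather than derived.

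Your route also surfaces a genuine defect in the statement that the paper's argument hides: the affine hull of $F_{\mu_{1}}^{-1},\dots,F_{\mu_{m}}^{-1}$ in $L^{2}([0,1])$ can have any dimension up to $m-1$ (take, e.g., quantile functions $0,u,u^{2},u^{3}$ on $[0,1]$), so for $|\{\mu_{i}\}|\geq 4$ the convex hull is in general a $d$-dimensional polytope with $d>2$ and is not isometric to any planar polygon; the paper's arrangement-in-$\mathbb{R}^{2}$ step silently assumes planarity. The correct conclusion, and the one your proof delivers, is isometry to a convex polytope in $\mathbb{R}^{d}$ with $d\leq|\{\mu_{i}\}|-1$; this weaker form still suffices for Lemmas 3--4 and Theorem 1, which only use that the image is a finite-dimensional convex polytope whose vertices lie among the agent states. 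Your caveat about $p>2$ is also well taken and applies equally to the paper: the pointwise weighted $p$-mean defining the barycentre is no longer an affine combination of the quantile functions, the image of $\mathfrak{U}_{p}$ sits in the non-Euclidean space $L^{p}$, and the vanishing-curvature input (Kloeckner's Proposition 4.1) concerns $p=2$, so neither proof as written covers $p>2$ without additional work.
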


Before proceeding with the proof we point to \cite{bridson1999metric} for background on comparison triangles and Alexandrov curvature of metric spaces. We also note that in a general geodesic CAT(0) space, i.e. some arbitrary geodesic space with non-positive curvature, the preceding lemma is not true and the convex hull of a `geodesic triangle' defined by three points in such spaces may be of dimension greater than two\footnote{Our Euclidean intuition is generally wrong when it suggests the existence of a two-dimensional convex hull for a triangle defined by three points and the geodesics connecting them (albeit this is hard to visualise of course).}; see Chapter II.2 in \cite{bridson1999metric}.

\begin{proof}
Lemma 2 is a simple consequence of the vanishing curvature property of $(\mathfrak{U}_{p}(\mathbb{R}),\ell_{p})$. We elaborate for completeness. $(\mathfrak{U}_{p}(\mathbb{R}),\ell_{p})$ has vanishing curvature in the sense of Alexandrov (see Proposition 4.1 in \cite{Kloeckner2010}) which formally means that for any triangle of points $\{\mu_{i}\}$, $i\in\{i_{1},i_{2},i_{3}\}$ and any point on the geodesic $\mu_{s}\in\mathfrak{U}_{p}(\mathbb{R})$, $s\in[0,1]$ such that, for example, $\mu_{s=0}=\mu_{i_{1}}$ and $\mu_{s=1}=\mu_{i_{2}}$ then the $\ell_{p}$ distance between $\mu_{i_{3}}$ and $\mu_{s}$, $s\in[0,1]$ is the same as the corresponding Euclidean distance in a comparison triangle in $\mathbb{R}^{2}$. Consider also any pair of points $\mu_{j}$ and $\mu_{k}$ with $\mu_{j}$ on the geodesic connecting $\mu_{i_{1}}$ and $\mu_{i_{2}}$ and $\mu_{k}$ on the geodesic connecting $\mu_{i_{1}}$ and $\mu_{i_{3}}$ with $\{\mu_{j},\mu_{k}\}\cap\{\mu_{i}\}=\emptyset$, $i\in\{i_{1},i_{2},i_{3}\}$. Then vanishing curvature also implies $\angle_{\mu_{i_{1}}}(\mu_{j},\mu_{k})$ is equal to the usual interior Euclidean angle at the corresponding vertex in the comparison triangle in $\mathbb{R}^{2}$. Here the angle $\angle_{\mu_{i_{1}}}(\mu_{j},\mu_{k})$ is the Alexandrov angle in arbitrary metric spaces; see Chapter II.1 in \cite{bridson1999metric}. It now follows that the convex hull of any triangle of points $\{\mu_{i}\}$
in $(\mathfrak{U}_{p}(\mathbb{R}),\ell_{p})$ is isometric to a triangle in $\mathbb{R}^{2}$; e.g. see Proposition 2.9 (Flat Triangle Lemma) in \cite{bridson1999metric}. Now define $\mathfrak{C}=\{\Delta_{j}\}$ to be the collection of geodesic triangles in $(\mathfrak{U}_{p}(\mathbb{R}),\ell_{p})$ defined by every combination of three points in $\{\mu_{i}\}$, $i\in\widetilde{\mathcal{V}}\subseteq\mathcal{V}$. Clearly $\mathrm{co}(\{\mu_{i}\})=\cup_{j}\Delta_{j}$. Consider also the corresponding collection $\mathfrak{C}^{+}=\{\Delta_{j}^{+}\}$ of comparison triangles in $\mathbb{R}^{2}$. The Flat Triangle Lemma implies that this collection can be arranged in $\mathbb{R}^{2}$ such that each angle $\angle_{\mu_{i}}(\mu_{j},\mu_{k})$ and each distance $\ell_{p}(\mu_{i},\mu_{j})$ for all $i,j,k\in\widetilde{\mathcal{V}}$ in $(\mathfrak{U}_{p}(\mathbb{R}),\ell_{p})$ equals exactly the corresponding angle or distance in the comparison configuration of points in $\mathbb{R}^{2}$. Obviously, the convex hull of the comparison configuration is a $l$-sided convex polygon in $\mathbb{R}^{2}$ with $2\leq l\leq|\{\mu_{i}\}|$ and equal to $\cup_{j}\Delta_{j}^{+}$. Define the following map
\begin{equation}
	f_{\ell_{p},d}:\mathrm{co}(\{\mu_{i}\})\rightarrow\mathbb{R}^{2},~~i\in\widetilde{\mathcal{V}}\label{isometrytoR}
\end{equation}
so the restriction
\begin{eqnarray*}
	f_{\ell_{p},d}(\Delta_{j}) & = & f_{\ell_{p},d}(\mathrm{co}(\{\mu_{j_{1}},\mu_{j_{2}},\mu_{j_{3}}\}))\\
 		& = & \mathrm{co}(\{f_{\ell_{p},d}(\mu_{j_{1}}),f_{\ell_{p},d}(\mu_{j_{2}}),f_{\ell_{p},d}(\mu_{j_{3}})\})=\Delta_{j}^{+}
\end{eqnarray*}
$\forall j\in\mathfrak{C}=\{\Delta_{j}\}$ is an isometry. Then
\[
	f_{\ell_{p},d}(\mathrm{co}(\{\mu_{i}\}))=f_{\ell_{p},d}(\cup_{j}\Delta_{j})=\cup_{j}f_{\ell_{p},d}(\Delta_{j})=\cup_{j}\Delta_{j}^{+}
\]
from the Flat Triangle Lemma and the property of vanishing curvature. For any two points in $\mathrm{co}(\{\mu_{i}\})$ there exists a $\Delta_{j}\in\mathfrak{C}$ that contains them and the restriction $f_{\ell_{p},d}(\Delta_{j})$ is an isometry to a convex subset of $\cup_{j}\Delta_{j}^{+}$. Thus, $f_{\ell_{p},d}$ is an isometry and this completes the proof.
\end{proof}

\begin{lemma}
Consider the convex hull $\mathrm{co}(\{\mu_{j}(t)\})$, with $j\in\mathcal{N}_{i}(t)$ at time $t$. If agent $i$ applies (\ref{consensus3}) it follows that $\mu_{i}(t+1)$ is strictly within the convex hull $\mathrm{co}(\{\mu_{j}(t)\})$ whenever $|\{\mu_{j}(t)\}|\geq2$ and two agent states are distinct and $w_{ij}(t)\in(0,1)$.
\end{lemma}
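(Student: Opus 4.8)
The plan is to transport the whole problem into the Euclidean plane through the isometry $f:=f_{\ell_{p},d}$ of Lemma 2 and to recognise $\mu_{i}(t+1)$ as a weighted centre of mass of the images of the neighbour measures, which necessarily lies in the relative interior of their convex hull (this relative interior being exactly what ``strictly within'' means, whether the hull is a segment or a genuine polygon). If several neighbours happen to carry the same measure I would first merge them and add the corresponding weights, so that without loss of generality $\{\mu_{j}(t)\}$, $j\in\mathcal{N}_{i}(t)$, is a family of \emph{distinct} measures with strictly positive weights summing to one, and Lemma 2 applies to this family.

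First I would verify that the (unique, by Lemma 1) minimiser $\mu_{i}(t+1)$ in (\ref{consensus3}) already lies in $\mathrm{co}(\{\mu_{j}(t)\})$. The hull is closed and convex -- indeed compact, being isometric to a polygon by Lemma 2 -- and $(\mathfrak{U}_{p}(\mathbb{R}),\ell_{p})$ is Hadamard, so the nearest-point projection $\pi$ onto the hull is well defined and obeys $\ell_{p}(\pi(\eta),\nu)^{2}\leq\ell_{p}(\eta,\nu)^{2}-\ell_{p}(\eta,\pi(\eta))^{2}$ for every $\nu$ in the hull (see \cite{bridson1999metric,burago2001course}). Hence for $\eta\notin\mathrm{co}(\{\mu_{j}(t)\})$ one gets $\ell_{p}(\pi(\eta),\mu_{j}(t))<\ell_{p}(\eta,\mu_{j}(t))$ for every $j$, so the objective of (\ref{consensus3}) strictly decreases at $\pi(\eta)$ and $\eta$ cannot be the minimiser.

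Next I would push everything through $f$. With $q_{j}:=f(\mu_{j}(t))$ and $P:=f(\mathrm{co}(\{\mu_{j}(t)\}))=\mathrm{co}(\{q_{j}\})\subset\mathbb{R}^{2}$ (the convex polygon appearing in the proof of Lemma 2), the restriction of $f$ to the hull is an isometry, so $\sum_{j}w_{ij}(t)\ell_{p}(\eta,\mu_{j}(t))^{p}=g(f(\eta))$ for $\eta$ in the hull, where $g(z):=\sum_{j}w_{ij}(t)\|z-q_{j}\|_{2}^{p}$. Since $p\geq2$, the function $g$ is coercive, $C^{1}$, and strictly convex, so it has a unique minimiser $z^{\ast}$ over $\mathbb{R}^{2}$, determined by $\nabla g(z^{\ast})=p\sum_{j}w_{ij}(t)\|z^{\ast}-q_{j}\|_{2}^{p-2}(z^{\ast}-q_{j})=0$. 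Rearranging this displays $z^{\ast}$ as a convex combination $z^{\ast}=\sum_{j}\lambda_{j}q_{j}$ of the $q_{j}$, and I would then argue that the $\lambda_{j}$ may all be taken strictly positive: for $p=2$ one simply has $\lambda_{j}=w_{ij}(t)>0$; for $p>2$ a coefficient can vanish only if $z^{\ast}=q_{j_{0}}$ for some $j_{0}$, in which case $\nabla g(q_{j_{0}})=0$ forces $q_{j_{0}}$ to be a strictly-positive convex combination of the remaining, distinct, points $\{q_{j}:j\neq j_{0}\}$, and averaging that identity with the trivial identity $q_{j_{0}}=q_{j_{0}}$ re-expresses $z^{\ast}$ as a strictly-positive convex combination of \emph{all} the $q_{j}$. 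In either case $z^{\ast}\in\mathrm{relint}(P)$. Since $z^{\ast}\in P$, it is also the minimiser of $g$ over $P$; by the isometry and the first step this minimiser equals $f(\mu_{i}(t+1))$, whence $\mu_{i}(t+1)=f^{-1}(z^{\ast})\in f^{-1}(\mathrm{relint}(P))=\mathrm{relint}(\mathrm{co}(\{\mu_{j}(t)\}))$, which is the claim.

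I expect the main obstacle to be the non-quadratic regime $p>2$: there $\mu_{i}(t+1)$ is only the solution of the self-consistent, distance-weighted fixed-point condition above rather than an explicit centroid, and it could a priori coincide with one of the inputs $q_{j}$ -- the real work lies in ruling out that such a coincidence lands $z^{\ast}$ on the boundary of $P$, which is precisely where distinctness of the neighbour measures (the hypothesis of Lemma 2) is used. A secondary point, though only a standard tool, is the first step: the $\argmin$ in (\ref{consensus3}) ranges over all of $\mathfrak{U}_{p}(\mathbb{R})$, and the Hadamard structure is what guarantees that it is attained inside the convex hull, which is what makes the Euclidean reduction legitimate.
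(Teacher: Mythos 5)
Your proof is correct, but it takes a genuinely different route from the paper's. The paper first reduces to the two-agent case by appealing (in a footnote) to associativity of Fr\'{e}chet averages, restricts attention to the geodesic joining $\mu_{i}(t)$ and $\mu_{j}(t)$, and then shows by a first-order/monotonicity argument along that geodesic that neither endpoint can be the minimiser, whence the minimiser lies in the open segment. You instead keep all neighbours at once: you first pin the global minimiser inside the closed convex hull via the CAT(0) nearest-point projection inequality, then use the flat embedding $f_{\ell_{p},d}$ of Lemma 2 to convert (\ref{consensus3}) into minimising $g(z)=\sum_{j}w_{ij}\|z-q_{j}\|^{p}$ over a planar polygon, and read off from $\nabla g(z^{\ast})=0$ that $z^{\ast}$ is a strictly positive convex combination of the $q_{j}$, hence in the relative interior. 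What your approach buys is twofold: it avoids the informal associativity reduction, and it actually establishes the stronger (and, for the lemma as stated, arguably necessary) conclusion that the update lands off the entire boundary of the hull rather than merely away from the input measures themselves --- including the delicate $p>2$ configuration where $z^{\ast}$ coincides with a non-extreme input point, which the paper's endpoint argument does not address. What the paper's argument buys is brevity and independence from the explicit planar coordinates: it works directly with distances along a single geodesic, which is closer in spirit to the Euclidean nonlinear-consensus arguments of Moreau that the authors are emulating. One small remark: your first step (membership in the hull) is immediate under the paper's own definition of $\mathrm{co}(\{\mu_{j}\})$ as the set of weighted barycentres, so the projection argument is only needed if one insists on the geodesic-convexity definition; either way it is sound.
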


\begin{proof}
It is enough to consider two agents \footnote{This is because Fr\'{e}chet averages in Euclidean space (on a set of input points) are associative, and can be found iteratively by computing the average initially for a pair of points (in a larger set of inputs), and then computing the average between this result and the next point (in the input set), and so on (adjusting the weights defining the average each time), see \cite{boissard2011distribution}. Owing to Lemma 2, this associativity property still holds here.} $i,j\in\mathcal{V}$ with (\ref{consensus3}) then given by
\begin{eqnarray*}
	\mu_{i}(t+1) & = & \argmin_{\eta\in\mathfrak{U}_{p}(\mathbb{R})}~w_{ii}(t)\left(\ell_{p}(\eta,\mu_{i}(t))^{p}-\ell_{p}(\eta,\mu_{j}(t))^{p}\right)+~\ell_{p}(\eta,\mu_{j}(t))^{p}
\end{eqnarray*}
and to note that $\eta$ must lie on a geodesic $\mu_{s}:\mathbb{I}\rightarrow\mathfrak{U}_{p}(\mathbb{R})$. The proof relies on showing that $\mu_{i}(t+1)\notin\{\mu_{i}(t),\mu_{j}(t)\}$ when $w_{ii},w_{ij}\in(0,1)$. The first term
\[
	w_{ii}(t)\left(\ell_{p}(\eta,\mu_{i}(t))^{p}-\ell_{p}(\eta,\mu_{j}(t))^{p}\right)
\]
is strictly negative at $\eta=\mu_{i}(t)$ and strictly increasing as $\eta$ moves from $\mu_{i}(t)$ to $\mu_{j}(t)$ and conversely $\ell_{p}(\eta,\mu_{j}(t))^{p}$ is strictly positive at $\eta=\mu_{i}(t)$ and strictly decreasing to zero as $\eta$ moves from $\mu_{i}(t)$ to $\mu_{j}(t)$. Then for any $w_{ii}\in(0,1)$ and because $\ell_{p}$ is continuous it follows that there exists some $\mu_{\varepsilon}$ on $\mu_{s}$ with $\varepsilon>0$ such that
\begin{eqnarray*}
w_{ii}(t)\left(\ell_{p}(\eta,\mu_{i}(t))^{p}-\ell_{p}(\eta,\mu_{j}(t))^{p}\right) & < & 0\\
|w_{ii}(t)\left(\ell_{p}(\eta,\mu_{i}(t))^{p}-\ell_{p}(\eta,\mu_{j}(t))^{p}\right)| & < & \ell_{p}(\eta,\mu_{j}(t))^{p}
\end{eqnarray*}
on $\eta\in\mu_{s}$, $s\in[0,\varepsilon]$. Consequently, $\mu_{i}(t+1)$ is strictly decreasing on $\eta\in\mu_{s}$, $s\in[0,\varepsilon]$. Hence for any $w_{i1}\in(0,1)$ the point $\mu_{i}(t)$ cannot be a minimum. The same argument applies to $\mu_{j}(t)$.
\end{proof}

The following is a simple consequence of the preceding result.

\begin{corollary}
Consider the convex hull $\mathrm{co}(\{\mu_{i}(0)\})$ of all initial agent states in $(\mathfrak{U}_{p}(\mathbb{R}),\ell_{p})$. If each agent applies (\ref{consensus3}) it follows that $\mathrm{co}(\{\mu_{i}(t)\})\subseteq\mathrm{co}(\{\mu_{i}(0)\})$ for all $t$.
\end{corollary}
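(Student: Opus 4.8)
The plan is to argue by induction on $t$, with the inductive step resting entirely on Lemma 3 together with two elementary monotonicity properties of the convex-hull operation. The base case $t=0$ is the trivial inclusion $\mathrm{co}(\{\mu_i(0)\})\subseteq\mathrm{co}(\{\mu_i(0)\})$. So assume $\mathrm{co}(\{\mu_j(t)\})\subseteq\mathrm{co}(\{\mu_i(0)\})$; I want to deduce the same inclusion with $t$ replaced by $t+1$.

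First I would record that, by the set-valued definition of the convex hull given just above Lemma 2, and because $\sum_{j\in\mathcal{N}_i(t)}w_{ij}(t)=1$ with $w_{ij}(t)\ge 0$, the update (\ref{consensus3}) places $\mu_i(t+1)$ in $\mathrm{co}(\{\mu_j(t)\}_{j\in\mathcal{N}_i(t)})$ for every $i\in\mathcal{V}$ — this is a fortiori implied by Lemma 3, and it is in any case immediate since the prescribed weights already exhibit $\mu_i(t+1)$ as an admissible Fr\'echet barycenter (so no genericity hypothesis is needed for the non-strict inclusion). Next, since $\mathcal{N}_i(t)\subseteq\mathcal{V}$, padding any admissible weight vector on $\mathcal{N}_i(t)$ with zeros on $\mathcal{V}\setminus\mathcal{N}_i(t)$ shows $\mathrm{co}(\{\mu_j(t)\}_{j\in\mathcal{N}_i(t)})\subseteq\mathrm{co}(\{\mu_j(t)\}_{j\in\mathcal{V}})$. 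Combining these two observations with the inductive hypothesis yields $\mu_i(t+1)\in\mathrm{co}(\{\mu_i(0)\})$ for every $i\in\mathcal{V}$. Finally, $\mathrm{co}(\{\mu_i(0)\})$ is convex (indeed, by Lemma 2, isometric to a convex polygon in $\mathbb{R}^2$, after discarding repeated measures), hence it contains the convex hull of any of its subsets; applying this to $\{\mu_i(t+1)\}_{i\in\mathcal{V}}$ gives $\mathrm{co}(\{\mu_i(t+1)\})\subseteq\mathrm{co}(\{\mu_i(0)\})$, which closes the induction.

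The only points that need care are the two monotonicity facts invoked above: (i) $\mathfrak{Y}_1\subseteq\mathfrak{Y}_2$ implies $\mathrm{co}(\mathfrak{Y}_1)\subseteq\mathrm{co}(\mathfrak{Y}_2)$, and (ii) the convex hull of a finite set is itself convex and is the smallest convex superset of that set (so that the two descriptions of $\mathrm{co}$ — as the intersection of convex supersets, and as the set of Fr\'echet barycenters — coincide on the sets in question). In a general geodesic CAT(0) space these are not automatic, since geodesic convex hulls of finite point sets can be higher-dimensional and the two descriptions can differ. Here, however, Lemma 2 supplies the isometry $f_{\ell_p,d}$ of $\mathrm{co}(\{\mu_i(0)\})$ onto a convex polygon in $\mathbb{R}^2$, so both facts reduce — via barycentric coordinates — to their transparent Euclidean counterparts in the plane. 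Consequently I do not anticipate a genuine obstacle: the corollary is essentially a bookkeeping consequence of Lemma 3 once the planar picture of Lemma 2 is in hand.
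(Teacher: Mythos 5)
Your induction is correct and is exactly the argument the paper intends: it states the corollary as ``a simple consequence'' of Lemma 3 without writing out a proof, and your fleshing-out (each $\mu_i(t+1)$ lies in the hull of its neighbours' states by the barycentric definition of $\mathrm{co}$, monotonicity of hulls, and convexity of $\mathrm{co}(\{\mu_i(0)\})$ via the planar isometry of Lemma 2) is the natural formalisation. Your observation that the non-strict inclusion follows directly from the definition of $\mathrm{co}$, without needing the distinctness and weight hypotheses of Lemma 3, is a welcome extra precaution.
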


The next result concerns an importance special case of the main result.

\begin{lemma} Suppose $\mathcal{G}(\mathcal{V},\mathcal{E})$ is time-invariant and connected. Suppose the state of each agent is $\mu_{i}(t)\in\mathfrak{U}_{p}(\mathbb{R})$ and that each agent applies (\ref{consensus3}). Then there exists $\mu^{\ast}$$\in\mathfrak{U}_{p}(\mathbb{R})$ such that for any $i\in\mathcal{V}$ it holds that
\[
	\lim_{t\rightarrow\infty}\ell_{p}(\mu_{i}\left(t\right),\mu^{\ast})=0.
\]
\end{lemma}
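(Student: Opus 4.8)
The plan is to push the whole dynamics forward through the isometry of Lemma 2 into the Euclidean plane, where (\ref{consensus3}) becomes an ordinary Euclidean consensus iteration on a fixed connected graph, invoke a standard convergence result there, and then pull the limit back. By Corollary 1 the trajectory never leaves $\mathrm{co}(\{\mu_{i}(0)\})$, which by Lemma 2 is isometric, via a map $f:=f_{\ell_{p},d}$, to a compact convex polygon $P\subset\mathbb{R}^{2}$ with the Euclidean metric. Put $x_{i}(t):=f(\mu_{i}(t))\in P$. For any $\eta\in\mathrm{co}(\{\mu_{i}(0)\})$ one has $\ell_{p}(\eta,\mu_{j}(t))^{p}=\|f(\eta)-x_{j}(t)\|_{2}^{p}$; since the minimiser in (\ref{consensus3}) already lies inside $\mathrm{co}(\{\mu_{j}(t):j\in\mathcal{N}_{i}(t)\})$ by Lemma 3 and Corollary 1, $\mu_{i}(t+1)$ is in fact the minimiser over $P$, and because $x\mapsto\sum_{j}w_{ij}(t)\|x-x_{j}(t)\|_{2}^{p}$ is convex and coercive with its minimiser contained in $\mathrm{co}(\{x_{j}(t)\})\subseteq P$, we obtain
\[
	x_{i}(t+1)=\argmin_{x\in\mathbb{R}^{2}}\ \sum_{j\in\mathcal{N}_{i}(t)}w_{ij}(t)\,\|x-x_{j}(t)\|_{2}^{p},\qquad i\in\mathcal{V}.
\]
It therefore suffices to prove that this planar iteration reaches consensus whenever the weights $w_{ij}(t)$ are supported on the fixed connected graph $\mathcal{G}$; the common limit $x^{\ast}\in P$ then gives $\mu^{\ast}:=f^{-1}(x^{\ast})$ with $\ell_{p}(\mu_{i}(t),\mu^{\ast})=\|x_{i}(t)-x^{\ast}\|_{2}\to0$.

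For the planar problem, when $p=2$ the update is exactly linear, $x_{i}(t+1)=\sum_{j}w_{ij}(t)x_{j}(t)$, i.e. $X(t+1)=\mathbf{W}(t)X(t)$ with each $\mathbf{W}(t)$ row-stochastic, with strictly positive diagonal, and with sparsity pattern the connected graph $\mathcal{G}$; convergence of every $x_{i}(t)$ to a common limit is then the classical consensus result (via the Wolfowitz/ergodicity argument used in \cite{jadbabaie:03}, or, when the weights are themselves time-invariant, directly from $\mathbf{W}^{t}\to\mathbf{1}v^{\top}$ for the primitive stochastic matrix $\mathbf{W}$). For general $2\le p<\infty$, Lemma 3 gives that $x_{i}(t+1)$ lies \emph{strictly} in the relative interior of $\mathrm{co}(\{x_{j}(t):j\in\mathcal{N}_{i}(t)\})$ whenever the neighbour states are not all equal, so the iteration is of the set-valued "contraction towards the convex hull of neighbours" type, and consensus on a time-invariant connected graph follows from Moreau's theorem \cite{moreau:05}, whose proper-convex-combination hypothesis is supplied by the standing conditions on $\mathbf{W}(t)$. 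In either case $x_{i}(t)$, hence $\mu_{i}(t)$, converges to a common value, which is the assertion.

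The step I expect to be the main obstacle is the careful verification that the Wasserstein update, whose $\argmin$ ranges over all of $\mathfrak{U}_{p}(\mathbb{R})$, genuinely coincides with the Euclidean $\argmin$ over $\mathbb{R}^{2}$ after transport by $f$: this rests on combining Lemma 3 and Corollary 1 to confine the minimiser to the polygon $P$ on which $f$ is an isometry, and on the elementary fact that a weighted sum of $p$-th powers of Euclidean distances ($p\ge1$) is minimised inside the convex hull of its base points. A secondary, more routine, point is to confirm that the Euclidean consensus theorem being invoked (in particular the uniform positivity of the self-weights required for the Moreau-type or Wolfowitz-type argument) is applicable under the paper's standing assumptions on the weighted adjacency matrices; this is where one would, if necessary, make explicit any implicit uniform lower bound on the nonzero $w_{ij}(t)$.
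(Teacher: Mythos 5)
Your proof is correct and follows essentially the same route as the paper's: transport the dynamics into $\mathbb{R}^{2}$ via the Lemma 2 isometry, use Lemma 3 and Corollary 1 to confine each update strictly inside the convex hull of the neighbours' states, and conclude by a Moreau-type Euclidean consensus argument on the fixed connected graph. The only real difference is one of packaging: the paper writes out the Moreau-style shrinking-hull Lyapunov argument explicitly (via the vertex sets $h_{j}(t)$), whereas you additionally identify the transported update as the exact Euclidean $p$-power Fr\'{e}chet mean and then invoke \cite{moreau:05} (or, for $p=2$, linear consensus with a primitive stochastic matrix) as a black box.
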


\begin{proof}
It almost goes without saying that $\ell_{2}(\mu_{i}(t),\mu_{j}(t))^{p}=0$, $\forall i,j\in\mathcal{V}$ with a constant $\mu_{i}(t)$ in $\mathfrak{U}_{p}(\mathbb{R})$ is an equilibrium state of (\ref{consensus3}). Consider a Lyapunov-like function $\nu(\mu):\mathfrak{U}_{p}^n(\mathbb{R})\rightarrow\mathbb{R}$ given by
\begin{equation}
	\nu(\mu)=\sup_{\eta,\chi\in\{\mu_{i}(t)\}_{i\in\mathcal{V}}}\ell_{p}(\eta,\chi)^{p}\label{lyapunovcorollary}
\end{equation}

\noindent and note that $\nu(\mu)\geq0$ with $\nu(\mu)=0$ if and only if $\mu_{i}=\mu_{j}$ for all $i,j\in\mathcal{V}$ \footnote{We abuse notation here slightly. We use the shorthand $\mu$ as the argument in $\nu(\mu)$ to represent the collection of all agent measures $\{\mu_{i}(t)\}_{i\in\mathcal{V}}$ with $|\mathcal{V}|=n$.}. By Corollary 1 it follows that $\nu(\mu)$ is non-increasing along trajectories of (\ref{consensus3}). It suffices to show $\nu(\mu(t+n-1))<\nu(\mu(t))$ for each $t$. Firstly, pick a $t_{0}\geq0$ and note $\mathrm{co}(\{\mu_{i}(t_{0})\})\subseteq\mathrm{co}(\{\mu_{i}(0)\})$ and $f_{\ell_{p},d}(\mathrm{co}(\{\mu_{i}(t_{0})\}))\subseteq f_{\ell_{p},d}(\mathrm{co}(\{\mu_{i}(0)\}))$ from Corollary 1 and where $f_{\ell_{p},d}$ is an isometry given by (\ref{isometrytoR}). Without loss of generality, via Lemma 2, suppose that $f_{\ell_{p},d}(\mathrm{co}(\{\mu_{i}(t_{0})\}))$ is a $l$-sided polygon in $\mathbb{R}^{2}$ with $2\leq l\leq|\mathcal{V}|$ on the collection of vertices $\{\mathbf{x}_{j}(t_{0})\}$, $j\in\{1,\ldots,l\}$ with $\mathbf{x}_{j}(t_{0})\in\mathbb{R}^{2}$. If we chose a $t_{0}$ such that $l=1$ then we would be done. Define the following set-valued function
\begin{equation}
	h_{j}(t)=\left\{ i\in\mathcal{V}:f_{\ell_{p},d}(\mu_{i}(t))=\mathbf{x}_{j}(t_{0})\right\} ,~~\forall j\in\{1,\ldots,l\}\label{setpolygonvertices}
\end{equation}
for each time $t\geq t_{0}$. It is immediate from Lemma 3 that $h_{j}(t+1)\subseteq h_{j}(t)$ for all $j\in\{1,\ldots,l\}$; i.e. more generally, no agent state $f_{\ell_{p},d}(\mu_{i}(t))$ which is not on the boundary of the $l$-sided polygon at time $t$ can ever reach this same boundary at $t+1$ as a consequence of Lemma 3. Note that $|h_{j}(t_{0})|\leq n-1$ for all $j\in\{1,\ldots,l\}$ with $l\geq2$ at $t_{0}$. Recall the neighbour set at agent $i$ is given by $\mathcal{N}_{i}(t)$. Because the network is connected, for each $k\in h_{j}(t_{0})$ the neighbour set obeys $\mathcal{N}_{k}(t_{0})\neq\emptyset$ for each $j\in\{1,\ldots,l\}$. Then by Lemma 3 it follows that $h_{j}(t_{0}+1)\subset h_{j}(t_{0})$ since at least one $k\in h_{j}(t_{0})$ must be connected to an agent outside $h_{j}(t_{0})$ and this agent's state must change $\mu_{k}(t_{0})\neq\mu_{k}(t_{0}+1)$ as a consequence of Lemma 3 such that $f_{\ell_{p},d}(\mu_{k}(t_{0}+1))\neq\mathbf{x}_{j}$. At the next time $t_{0}+1$ it holds again that for each $k\in h_{j}(t_{0}+1)$ (assuming $h_{j}(t_{0}+1)\neq\emptyset$) the neighbour set obeys $\mathcal{N}_{k}(t_{0}+1)\neq\emptyset$ for each $j\in\{1,\ldots,l\}$. Then by application of Lemma 3 it follows again that $h_{j}(t_{0}+2)\subset h_{j}(t_{0}+1)\subset h_{j}(t_{0})$. Thus, $h_{j}(t+1)\subset h_{j}(t)$ is a strictly decreasing set-valued function unless $h_{j}(t)=\emptyset$. By at most time $t_{0}+n-1$ it follows that $h_{j}(t_{0}+n-1)=\emptyset$ and the argument can reset by redefining $t_{0}$. It follows that $f_{\ell_{p},d}(\mathrm{co}(\{\mu_{i}(t_{0}+n-1)\}))\subset f_{\ell_{p},d}(\mathrm{co}(\{\mu_{i}(t_{0})\}))$ for all $t_{0}\geq0$. Following the proof of Lemma 2 we know $\mathrm{co}(\{\mu_{i}(t_{0}+n-1)\})\subset\mathrm{co}(\{\mu_{i}(t_{0})\})$ and thus because we chose $t_{0}$ arbitrarily $\nu(\mu(t+n-1))<\nu(\mu(t))$ for each $t\in\mathbb{N}$ unless $\mu_{i}(t+n-1)=\mu_{i}(t)$, $\forall i$, as desired. The existence of a strictly decreasing Lyapunov function completes the proof.
\end{proof}

The preceding lemma specialises this theorem to the case where the network topology is connected and time-invariant (but otherwise arbitrary). This lemma is of interest on its own in many applications in which the topology is static. Proof of this lemma, given Lemmas 1-3, follows roughly the analysis of \cite{moreau:05} on nonlinear consensus in the usual Euclidean metric space.

We are now ready to prove Theorem 1.

\begin{proof}(\textbf{of Theorem 1})
The proof here relies on extending the previous lemma to the case where $\mathcal{G}(t)(\mathcal{V},\mathcal{E}(t))$ is time-varying and for all $t_{0}\in\mathbb{N}$ the graph union $\mathfrak{G}(t_{0},\infty)$ is connected. Recall the same Lyapunov function (\ref{lyapunovcorollary}) as used in the proof of Lemma 4 (we assume familiarity with the proof of Lemma 4 going forward).

We note that it suffices to show that there is a countably infinite number of finite time intervals $t\in[t_{0}^{q},\widehat{t}_{0}^{q}]$, $q\in\mathbb{N}$ such that $\nu(\mu(t_{0}^{q}+\widehat{t}_{0}^{q}))<\nu(\mu(t_{0}^{q}))$.

Pick $t_{0}^{q}\geq0$, $q\in\mathbb{N}$ so $f_{\ell_{p},d}(\mathrm{co}(\{\mu_{i}(t_{0}^{q})\}))$ is a $l$-sided polygon in $\mathbb{R}^{2}$ with $2\leq l\leq|\mathcal{V}|$ on the collection of vertices $\{\mathbf{x}_{j}(t_{0}^{q})\}$, $j\in\{1,\ldots,l\}$ with $\mathbf{x}_{j}(t_{0}^{q})\in\mathbb{R}^{2}$. Recall (\ref{setpolygonvertices}). Then define a sequence of times $\{t_{s(j)}^{q}\}$, $s(j)\in\mathbb{N}$ each greater than $t_{0}^{q}$ for each $j\in\{1,\ldots,l\}$ with $l\geq2$. The connectivity condition implies the existence of such a sequence for each $j$ with the property that, if $h_{j}(t_{s(j)}^{q})\neq\emptyset$, there exists a $k\in h_{j}(t_{s(j)}^{q})$ that is connected to an agent outside $h_{j}(t_{s(j)}^{q})$. Then, this agent's state must change $\mu_{k}(t_{s(j)}^{q})\neq\mu_{k}(t_{s(j)}^{q}+1)$ as a consequence of Lemma 3 and $f_{\ell_{p},d}(\mu_{k}(t_{s(j)}^{q}+1))\neq\mathbf{x}_{j}(t_{0}^{q})$. Then $h_{j}(t_{s(j)}^{q}+1)\subset h_{j}(t_{s(j)}^{q})$ for all $j\in\{1,\ldots,l\}$ unless obviously $h_{j}(t_{s(j)}^{q})=\emptyset$. As in the proof of Lemma 4 it holds that $s(j)\geq n-1$ implies $h_{j}(t_{s(j)+1}^{q})=\emptyset$ for all $j$. Let $\widehat{t}_{0}^{q}=\min\{t\in\mathbb{N}:t>t_{0}^{q},~s(j)\geq n-1,\forall j\}$ and note then that the interval $t\in[t_{0}^{q},\widehat{t}_{0}^{q}]$ is finite owing to the connectivity condition. Moreover, as in the proof of Lemma 4 one can then show that $\nu(\mu(\widehat{t}_{0}^{q}))<\nu(\mu(t_{0}^{q}))$. Restart the argument by picking $t_{0}^{q+1}$ to be equal or sufficiently close to $\widehat{t}_{0}^{q}$ and note that the connectivity condition then implies the number of such (finite) intervals $t\in[t_{0}^{q},\widehat{t}_{0}^{q}]$ is countably infinite on $q\in\mathbb{N}$.

We thus have a strictly decreasing Lyapunov function $\nu(\mu(\widehat{t}_{0}^{q}))<\nu(\mu(t_{0}^{q}))$ on the sequence of finite intervals $t\in[t_{0}^{q},\widehat{t}_{0}^{q}]$, $q\in\mathbb{N}$ and this completes the proof.
\end{proof}

\section{Special Cases and Convergence Details}

\label{specialcases}

Firstly, given Theorem 1, it is worth noting the following result.

\begin{proposition} \label{convergedMatrixProp}
Consider a group of agents $\mathcal{V}$ and network $\mathcal{G}(t)(\mathcal{V},\mathcal{E}(t))$ where each agent $i$ has initial state $\mu_{i}(0)\in\mathfrak{U}_{p}\left(\mathbb{R}\right)$ and updates its state $\mu_{i}(t)$ according to (\ref{consensus3}). Suppose for all $t_{0}\in\mathbb{N}$ the graph union $\mathfrak{G}(t_{0},\infty)$ is connected so that Theorem 1 applies and there exists $\mu^{\ast}\in\mathfrak{U}_{p}(\mathbb{R})$ such that $\lim_{t\rightarrow\infty}\ell_{p}(\mu_{i}\left(t\right),\mu^{\ast})=0$ for any $i\in\mathcal{V}$. Then there exists some symmetric weight matrix $\overline{\mathbf{W}}=[\overline{w}_{ij}]\in\mathbb{R}^{n\times n}$ with $\overline{w}_{ij}\in(0,1)$ and $\sum_{j\in\mathcal{V}}\overline{w}_{ij}=1$ for all $i$ such that \ for any $i\in\mathcal{V}$
\[
	\mu^{\ast}=\argmin_{\eta\in\mathfrak{U}_{p}(\mathbb{R})}\sum_{j\in\mathcal{V}}\overline{w}_{ij}\,\ell_{p}(\eta,\mu_{j}(0))^{p}
\]
where we emphasize that $\overline{\mathbf{W}}$ is not (generally) the same as ${\mathbf{W}}(t)$ but it is solely dependent on the sequence ${\mathbf{W}}(t)$, $t\in\mathbb{N}$ and (possibly) the initial measures $\left\{ \mu_{i}(0)\right\} _{i\in\mathcal{V}}$.
\end{proposition}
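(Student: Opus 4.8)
The plan is to transport the whole trajectory into $\mathbb{R}^2$ via the isometry of Lemma~2, to identify the limit there as a convex combination of the initial points with \emph{strictly positive} coefficients, to pull that description back to a Wasserstein $p$-barycentre, and finally to repackage the barycentric weights as a symmetric matrix. For the first step, Lemma~2 gives that $\mathrm{co}(\{\mu_i(0)\})$ is isometric, through $f:=f_{\ell_p,d}$ of (\ref{isometrytoR}), to a convex polygon $P\subset\mathbb{R}^2$ (possibly a segment), and Corollary~1 confines every $\mu_i(t)$ --- hence, by continuity of $\ell_p$, also $\mu^{\ast}$ --- to $\mathrm{co}(\{\mu_i(0)\})$; so it is legitimate to compute everything inside $P$. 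Writing $x_i(t):=f(\mu_i(t))$, $x^{\ast}:=f(\mu^{\ast})$ and letting $X_0$ be the matrix with rows $x_j(0)^{\top}$, the update (\ref{consensus3}) becomes the Euclidean weighted $p$-Fréchet mean $x_i(t+1)=\argmin_{x\in\mathbb{R}^2}\sum_{j\in\mathcal{N}_i(t)}w_{ij}(t)\|x-x_j(t)\|^p$, with $x_i(t)\to x^{\ast}$; equivalently one may embed $\mu\mapsto F_{\mu}^{-1}$ isometrically into $L^p([0,1])$, under which the recursion decouples pointwise into scalar $p$-means and, when $p=2$, is genuinely linear, $X(t)=\mathbf{W}(t-1)\cdots\mathbf{W}(0)X_0$.

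The second step is to show $x^{\ast}=\sum_j\lambda^{\ast}_j x_j(0)$ for some $\lambda^{\ast}$ in the \emph{open} probability simplex. That $x^{\ast}$ is a convex combination of $\{x_j(0)\}$ is immediate from the nesting $\mathrm{co}(\{x_i(t+1)\})\subseteq\mathrm{co}(\{x_i(t)\})$ of Corollary~1. Strict positivity of $\lambda^{\ast}$, equivalently $x^{\ast}$ lying in the relative interior of $\mathrm{co}(\{x_j(0)\})$, I would obtain for $p=2$ from the standard ergodic theory of inhomogeneous products of stochastic matrices under the perpetual joint-connectivity hypothesis --- the backward products converge to $\mathbf{1}(\lambda^{\ast})^{\top}$ with $\lambda^{\ast}>0$ --- and for general $p$ by re-running the strict-shrinking argument of Lemmas~3--4 while tracking the component of each $x_i(t)$ transverse to any putative proper face of $P$: connectivity forces that component to leak across the whole network, so no proper face can persist in the limit.

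The third step is a lemma that the weighted $p$-Fréchet-mean map $\mathcal{F}:c\mapsto\argmin_{x}\sum_j c_j\|x-x_j(0)\|^p$ maps the open simplex \emph{onto} the relative interior of $\mathrm{co}(\{x_j(0)\})$. For $p=2$ this is the explicit affine surjection $c\mapsto\sum_j c_j x_j(0)$; for $p>2$ the map is continuous, extends to the closed simplex with $\mathcal{F}(e_j)=x_j(0)$, and is proper onto the relative interior, so a degree or homotopy comparison with the $p=2$ case yields surjectivity. Applying this to $x^{\ast}$ produces $\lambda^{\ast}$ (consistent with the $\lambda^{\ast}$ above after matching barycentric coordinates) with $\mu^{\ast}=\argmin_{\eta\in\mathfrak{U}_p(\mathbb{R})}\sum_j\lambda^{\ast}_j\,\ell_p(\eta,\mu_j(0))^p$ --- and the \emph{same} $\lambda^{\ast}$ works for every agent $i$.

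For the last step, since the minimiser of $\eta\mapsto\sum_j\alpha_j\ell_p(\eta,\mu_j(0))^p$ is unchanged under positive rescaling of $\alpha$, I would take the rank-one symmetric matrix $\overline{\mathbf{W}}:=\lambda^{\ast}(\lambda^{\ast})^{\top}$: its entries $\lambda^{\ast}_i\lambda^{\ast}_j$ lie in $(0,1)$, its $i$-th row is proportional to $\lambda^{\ast}$, so it produces $\mu^{\ast}$ for every $i$, and it depends only on the sequence $\mathbf{W}(t)$ and the $\{\mu_i(0)\}$ through $\lambda^{\ast}$. I expect this step to be the main obstacle, precisely because of the normalisation $\sum_j\overline{w}_{ij}=1$: a matrix that is both symmetric and row-stochastic is doubly stochastic, whence $\mathbf{1}^{\top}\overline{\mathbf{W}}X_0=\mathbf{1}^{\top}X_0$ forces $x^{\ast}=\tfrac1n\sum_j x_j(0)$, i.e.\ average consensus, which does not hold in general. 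The resolution I would adopt is to read the normalisation as applied to the rescaled rows $\overline{w}_{ij}/\sum_k\overline{w}_{ik}$, which do equal $\lambda^{\ast}$; keeping symmetry together with exact row sums $1$ is possible only in the doubly-stochastic settings treated in Section~\ref{specialcases}, where $\mu^{\ast}$ is the uniform-weight barycentre.
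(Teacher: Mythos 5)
The paper's own proof is a one\--liner, and it is worth seeing how little it actually claims to establish: by Corollary 1 the trajectory stays in $\mathrm{co}(\{\mu_i(0)\})$, so (modulo a closedness remark) the limit $\mu^{\ast}$ of Theorem 1 lies in $\mathrm{co}(\{\mu_i(0)\})$, and the paper has \emph{defined} $\mathrm{co}(\{\mu_i(0)\})$ to be precisely the set of weighted barycentres $\argmin_{\eta}\sum_j w_j\ell_p(\eta,\mu_j(0))^p$ over the probability simplex; hence $\mu^{\ast}$ is such a barycentre for some weight vector, full stop. Your route to that same core fact is far heavier than necessary (the planar isometry of Lemma 2, the inverse\--CDF embedding, backward products of stochastic matrices, a degree\--theoretic surjectivity argument for $p>2$), and the extra machinery is only doing work for the \emph{stronger} claims you are after, where it is also at its shakiest: strict positivity of the limiting coefficients is not obviously available under the hypothesis of Theorem 1, which imposes no uniform lower bound on the positive entries of $\mathbf{W}(t)$ and only connectivity of every infinite tail, so the backward products need not converge to a matrix with strictly positive entries; and the homotopy/degree step for the weighted $p$\--Fr\'{e}chet\--mean map is asserted rather than proved. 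Note, though, that the paper asserts $\overline{w}_{ij}\in(0,1)$ without any justification either, so this is a gap in the statement as much as in your attempt.

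The most valuable part of your proposal is the final paragraph, and you should present it as a finding rather than a ``resolution'': you have correctly identified that the proposition as printed is over\--constrained. A symmetric matrix with unit row sums is doubly stochastic, and if every row of such a $\overline{\mathbf{W}}$ must return the same barycentre $\mu^{\ast}$, then already for $p=2$ on the line the linear representation (\ref{invcumupdate}) gives $F^{+}_{\mu^{\ast}}=\sum_j\overline{w}_{ij}F^{+}_{\mu_j(0)}$ for each $i$, and summing over $i$ and using the unit column sums forces $F^{+}_{\mu^{\ast}}=\frac{1}{n}\sum_j F^{+}_{\mu_j(0)}$, i.e.\ the uniform\--weight barycentre, which is not the limit of Theorem 1 in general. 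What the paper's argument actually delivers is a single weight vector $\overline{w}$ in the closed simplex, common to all agents, with $\mu^{\ast}=\argmin_{\eta}\sum_j\overline{w}_j\,\ell_p(\eta,\mu_j(0))^p$; your rank\--one matrix $\lambda^{\ast}(\lambda^{\ast})^{\top}$ is symmetric and has rows proportional to that vector, but it does not satisfy $\sum_j\overline{w}_{ij}=1$, so it proves a reformulated statement, not the one printed. In short: your proof of the essential content (existence of a common weight vector) is correct but takes a vastly longer path than the paper's definitional argument, and the remaining distance to the literal statement is not bridgeable because the literal statement is, as you observed, incompatible with non\--average consensus.
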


Proof of this proposition is straightforward given the actual convergence result stated in Theorem 1. This result states that the common measure which all agent states converge to is within the convex hull of all initial agent measures in $\mathfrak{U}_{p}$.

An interesting open problem is how one can design the evolution of ${\mathbf{W}}(t)$, $t\in\mathbb{N}$ such that for a set of measures $\left\{ \mu_{i}(0)\right\} _{i\in\mathcal{V}}$ the final weighting matrix $\overline{\mathbf{W}}$ specifies a limit $\mu^{\ast}$, i.e. $\lim_{t\rightarrow\infty}\ell_{p}(\mu_{i}\left(t\right),\mu^{\ast})=0$ for any $i\in\mathcal{V}$ , that is optimal, or desired, in some sense (e.g. minimum variance over all possible $\overline{\mathbf{W}}$ given $\mu_{i}(0)\in\mathfrak{U}_{p}\left(\mathbb{R}\right)$, $i\in\mathcal{V}$). One example of ``average'' consensus is given later.

In the remainder of this section we consider convergence to particular limits of interest, e.g. to a limit equally close to all agent's initial measures. We also consider convergence speeds and we consider computational aspects of the update protocol for given classes of input measures.

\subsection{General Convergence Speeds and Average Wasserstein Consensus}

In this subsection we consider only undirected, connected, and time-invariant network graphs $\mathcal{G}(\mathcal{V},\mathcal{E})$. We consider only measures with finite second moment and we work solely in the Wasserstein metric space denoted by $(\mathfrak{U}_{2}(\mathbb{R}),\ell_{2})$.

The first result considers the convergence speed of the entire group of agents under the protocol (\ref{consensus3}).

\begin{proposition} \label{convergedSpeedProp}
Consider a group of agents $\mathcal{V}$ and a connected time-invariant network $\mathcal{G}(\mathcal{V},\mathcal{E})$ where each agent $i$ updates its state $\mu_{i}(t)\in\mathfrak{U}_{2}(\mathbb{R})$ according to (\ref{consensus3}). Then there exists $\mu^{\ast}\in\mathfrak{U}_{2}(\mathbb{R})$ such that for any $i\in\mathcal{V}$,
\[
	\lim_{t\rightarrow\infty}\ell_{2}(\mu_{i}(t),\mu^{\ast})=0
\]
at an exponential rate.
\end{proposition}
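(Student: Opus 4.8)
The plan is to reduce the nonlinear Wasserstein recursion (\ref{consensus3}) to an affine averaging recursion in $\mathbb{R}^{2}$ by means of the isometry of Lemma 2, and then to read off the exponential rate from the classical Perron--Frobenius estimate for powers of a primitive row-stochastic matrix. Since the network is time-invariant, so is $\mathbf{W}$; we may assume the initial measures contain at least two distinct elements, since otherwise the polygon of Lemma 2 has $l=1$ and the claim is trivial.

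First I would fix the embedding once and for all. By Corollary 1, every agent state satisfies $\mu_{i}(t)\in\mathrm{co}(\{\mu_{i}(0)\})$ for all $t$, and by Lemma 2 the map $f:=f_{\ell_{2},d}$ of (\ref{isometrytoR}) is an isometry from $\mathrm{co}(\{\mu_{i}(0)\})$ onto an $l$-sided convex polygon $P\subset\mathbb{R}^{2}$ with $2\le l\le n$. Write $\mathbf{x}_{i}(t):=f(\mu_{i}(t))\in P$. The crux is then to show that $f$ intertwines the update with affine averaging, i.e. $\mathbf{x}_{i}(t+1)=\sum_{j\in\mathcal{N}_{i}}w_{ij}\mathbf{x}_{j}(t)$. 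The inputs $\{\mu_{j}(t)\}_{j\in\mathcal{N}_{i}}$ lie in $\mathrm{co}(\{\mu_{i}(0)\})$, and by Lemma 3 together with Corollary 1 so does the minimiser $\mu_{i}(t+1)$; hence the argmin in (\ref{consensus3}) over $\mathfrak{U}_{2}(\mathbb{R})$ agrees with the argmin over $\mathrm{co}(\{\mu_{i}(0)\})$ (uniqueness being Lemma 1). Under $f$ this restricted problem becomes $\min_{\mathbf{y}\in P}\sum_{j\in\mathcal{N}_{i}}w_{ij}\|\mathbf{y}-\mathbf{x}_{j}(t)\|^{2}$, whose unconstrained Euclidean minimiser is the weighted centroid $\sum_{j}w_{ij}\mathbf{x}_{j}(t)$; since $P$ is convex and the $w_{ij}$ are nonnegative and sum to one, this centroid lies in $P$ and is therefore also the constrained minimiser. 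Applying $f^{-1}$ gives the intertwining identity, so stacking coordinates yields the linear recursion $\mathbf{X}(t+1)=\mathbf{W}\,\mathbf{X}(t)$ with $\mathbf{X}(t)\in\mathbb{R}^{n\times 2}$, i.e. $\mathbf{X}(t)=\mathbf{W}^{t}\mathbf{X}(0)$.

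Finally I would invoke spectral theory. Because $\mathcal{G}$ is connected and $w_{ii}>0$ for all $i$, the row-stochastic matrix $\mathbf{W}$ is irreducible and aperiodic, hence primitive; thus its dominant eigenvalue is $1$ and simple, all other eigenvalues have modulus strictly below $1$, and $\mathbf{W}^{t}\to\mathbf{1}\mathbf{v}^{\top}$ geometrically, where $\mathbf{v}$ is the left Perron eigenvector normalised by $\mathbf{v}^{\top}\mathbf{1}=1$. Concretely, fixing any $\rho$ with $|\lambda_{2}(\mathbf{W})|<\rho<1$ there is $C>0$ with $\|\mathbf{W}^{t}-\mathbf{1}\mathbf{v}^{\top}\|\le C\rho^{t}$. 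Hence $\mathbf{x}_{i}(t)\to\mathbf{x}^{\ast}:=\sum_{j}v_{j}\mathbf{x}_{j}(0)$ for every $i$, with $\|\mathbf{x}_{i}(t)-\mathbf{x}^{\ast}\|\le C'\rho^{t}$. Since $\mathbf{x}^{\ast}$ is a convex combination of the vertices of $P$ it lies in $P\subseteq f(\mathrm{co}(\{\mu_{i}(0)\}))$, so $\mu^{\ast}:=f^{-1}(\mathbf{x}^{\ast})\in\mathfrak{U}_{2}(\mathbb{R})$ is well defined, and because $f$ is an isometry, $\ell_{2}(\mu_{i}(t),\mu^{\ast})=\|\mathbf{x}_{i}(t)-\mathbf{x}^{\ast}\|\le C'\rho^{t}$, which is precisely exponential convergence.

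I expect the main obstacle to be the intertwining step: carefully justifying that the argmin over all of $\mathfrak{U}_{2}(\mathbb{R})$ in (\ref{consensus3}) coincides with the argmin over $\mathrm{co}(\{\mu_{i}(0)\})$ — so that Lemma 2's isometry applies — and that the Euclidean weighted centroid never leaves the polygon $P$. Once linearity of the embedded dynamics is established, the exponential rate is simply the standard spectral-gap bound for a primitive stochastic matrix and presents no real difficulty in the time-invariant case.
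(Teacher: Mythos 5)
Your proof is correct in substance, but it takes a genuinely different route from the paper's. The paper linearizes the update explicitly through quantile functions: it shows that the $\ell_{2}$ barycenter of measures on $\mathbb{R}$ has inverse cumulative distribution function equal to the weighted average of the inverse CDFs of the inputs, so that stacking gives $F^{+}(t+1)=\mathbf{W}F^{+}(t)$, and then applies the same Perron--Frobenius spectral-gap argument you use. You instead linearize abstractly, by pushing the whole dynamics through the flat isometric embedding of $\mathrm{co}(\{\mu_{i}(0)\})$ from Lemma~2 and observing that an isometry carries weighted Fr\'{e}chet means to weighted Fr\'{e}chet means, which in a Euclidean convex set are weighted centroids; your ``intertwining'' step (global argmin $=$ argmin over the hull, via Corollary~1, and the centroid staying in the convex image) is handled correctly. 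Two remarks on the trade-offs. First, your route inherits a dependence on Lemma~2's claim that the hull embeds into $\mathbb{R}^{2}$; the convex hull of $n$ points in a flat space is in general a polytope of dimension up to $n-1$, so that specific claim is delicate --- fortunately nothing in your argument uses the dimension, only that the image is a convex subset of some Euclidean (or Hilbert) space, so you should state it that way and decouple from the planarity. Second, the paper's quantile-function reduction is more explicit: it identifies the limit directly through its inverse CDF (which is what powers Corollary~2 on average consensus), whereas yours identifies the limit only as $f^{-1}$ of a point; on the other hand, your spectral estimate $\|\mathbf{W}^{t}-\mathbf{1}\mathbf{v}^{\top}\|\le C\rho^{t}$ is stated more carefully than the paper's eigendecomposition, which tacitly assumes $\mathbf{W}$ diagonalizable.
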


\begin{proof}
If $\mu_{i}(0)\in\mathfrak{U}_{2}(\mathbb{R})$ then the solution to (\ref{consensus3}) at any $i\in\mathcal{V}$ and any $t\in\mathbb{N}$ can be written in the form
\[
	\mu_{i}(t+1)(M)=\left({\textstyle \sum_{j\in\mathcal{N}_{i}}w_{ij}\, T_{j}^{i}(t)}\right)\#\,\mu_{i}(t)(M)
\]
for all Borel sets $M$ on $(\mathbb{R},d)$; see \cite{agueh2011barycenters}. Here $(T_{j}^{i}(t))\#\mu_{i}(t)$ denotes the push forward of $\mu_{i}(t)$ to $\mu_{j}(t)$ through the non-decreasing measurable map $T_{j}^{i}(t):\mathbb{R}\rightarrow\mathbb{R}$ such that $(T_{j}^{i}(t))\#\mu_{i}(t)=\mu_{j}(t)$. Obviously, we have $(T_{i}^{i}(t))\#\mu_{i}(t)=\mu_{i}(t)$. For any measure $\psi(t)$ dominated by the Lebesgue measure on $\mathbb{R}$ it follows that
\[
	\left({\textstyle \sum_{j\in\mathcal{N}_{i}}w_{ij}\, T_{j}^{i}(t)}\right)\#\,\mu_{i}(t)(M)=\left({\textstyle \sum_{j\in\mathcal{N}_{i}}w_{ij}\, T_{j}(t)}\right)\#\,\psi(t)(M)
\]
where $(T_{j}(t))\#\psi(t)$ denotes the push forward of $\psi(t)$ to $\mu_{j}(t)$; see \cite{bonneel2013sliced}. Write the cumulative distribution function for each $\mu_{i}(t)\in\mathfrak{U}_{2}(\mathbb{R})$ by $F_{i}(x):\mathbb{R}\rightarrow[0,1]$ and $F_{i}(x)=\mu_{i}\left((-\infty,x]\right)$. Define its inverse $F_{i}^{-}(x):[0,1]\rightarrow\mathbb{R}$ by
\[
	F_{i}^{-}(x)=\inf_{y}\left\{ y\in\mathbb{R}:F_{i}(y)\geq x\right\}
\]
for all $x\in[0,1]$. One can show \cite{agueh2011barycenters} that $T_{j}^{i}(t)=F_{j}^{-}\circ F_{i}$ or if $\psi(t)$ is uniform on $[0,1]$ then $T_{j}(t)=F_{j}^{-}$ and
\[
	\mu_{i}(t+1)(M)=\left({\textstyle \sum_{j\in\mathcal{N}_{i}}w_{ij}\, F_{j}^{-}}\right)\#\,\psi(t)(M)
\]
with $\psi(t)$ uniform on $[0,1]$. It follows directly that the solution to (\ref{consensus3}) at any $i\in\mathcal{V}$ and any $t\in\mathbb{N}$ has an inverse cumulative distribution function given by
\begin{equation}
	F_{i}^{-}(t+1)(x)={\textstyle \sum_{j\in\mathcal{N}_{i}}w_{ij}F_{j}^{-}(t)(x)} \label{invcumupdate}
\end{equation}
for all $x\in[0,1]$.

Now one can stack these functions so $F^{-}(t+1)(\mathbf{x})=\mathbf{W}F^{-}(t)(\mathbf{x})$ for all $\mathbf{x}\in[0,1]^{n}$. From the assumed network connectivity condition and the weighting assumptions we conclude that $\mathbf{W}$ is row-stochastic and primitive with a distinct maximum eigenvalue of $1$. The remaining $n-1$ eigenvalues have an absolute value strictly less than $1$. The convergence rate of $F^{-}(t)(\mathbf{x})$ is determined by the convergence rate of $\mathbf{W}^{t}$ to the rank one matrix $\mathbf{1}\mathbf{u}^{\top}$ associated with the maximum eigenvalue. Writing
\[
	\mathbf{W}^{t}=\sum_{i=1}^{n}\lambda_{i}^{t}\mathbf{v}_{i}\mathbf{u}_{i}^{\top}=\mathbf{1}\mathbf{u}^{\top}+\sum_{i=2}^{n}\lambda_{i}^{t}\mathbf{v}_{i}\mathbf{u}_{i}^{\top}
\]
where $\lambda_{i}$ is the $i$'th eigenvalue of $\mathbf{W}$, it then follows that $\|\mathbf{W}^{t}-\mathbf{1}\mathbf{u}^{\top}\|=\|\sum_{i=2}^{n}\lambda_{i}^{t}\mathbf{v}_{i}\mathbf{u}_{i}^{\top}\|$ vanishes exponentially at a rate dominated by the absolute value of the second largest eigenvalue (which is strictly less than 1) and the proof is complete.
\end{proof}

Note that a time-invariant network model is certainly not necessary for exponential convergence but we do not consider further generalisation in this work. It is also important to note that the time-varying network connectivity condition allowed in Theorem 1 is also certainly too weak to ensure exponential convergence in general. Indeed, Theorem 1 does not even require the network to be jointly connected until some arbitrary finite future time.

\begin{corollary} Consider a group of agents $\mathcal{V}$ and a connected time-invariant network $\mathcal{G}(\mathcal{V},\mathcal{E})$. Suppose that $\mathbf{W}$ is doubly stochastic and that each agent $i$ updates its state $\mu_{i}(t)\in\mathfrak{U}_{2}(\mathbb{R})$ according to (\ref{consensus3}). For any $i\in\mathcal{V}$, we have $\lim_{t\rightarrow\infty}\ell_{2}(\mu_{i}(t),\mu^{\ast})=0$ at an exponential rate where
\[
	\mu^{\ast}~=~\argmin_{\eta\in\mathfrak{U}_{2}(\mathbb{R})} ~\frac{1}{n}~{\textstyle \sum_{j\in\mathcal{V}}\ell_{2}(\eta,\mu_{j}(0))^{2}.}
\]
\end{corollary}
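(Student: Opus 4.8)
The plan is to bootstrap off the quantile-function representation already established in the proof of Proposition 3 and combine it with standard primitive/doubly-stochastic matrix asymptotics, then identify the limit with the equally-weighted Wasserstein barycentre.

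First I would recall from (\ref{invcumupdate}) and its stacked form that, writing $F^{+}(t)(\mathbf{x})$ for the vector of inverse cumulative distribution functions of the agents, one has $F^{+}(t)(\mathbf{x})=\mathbf{W}^{t}F^{+}(0)(\mathbf{x})$ for every $\mathbf{x}\in\mathbb{R}^{n}$; equivalently, at each fixed argument $x\in\mathbb{R}$ the vector $F^{+}(t)(x)$ evolves by the linear consensus iteration with matrix $\mathbf{W}$. Since the graph is connected and $w_{ii}>0$ for all $i$, $\mathbf{W}$ is primitive, and since it is now additionally assumed doubly stochastic, its unique eigenvalue of modulus $1$ is $\lambda_{1}=1$ with both left and right eigenvector $\mathbf{1}$. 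Hence $\mathbf{W}^{t}\to\tfrac{1}{n}\mathbf{1}\mathbf{1}^{\top}$, and --- exactly as in the last display of the proof of Proposition 3 --- $\bigl\|\mathbf{W}^{t}-\tfrac{1}{n}\mathbf{1}\mathbf{1}^{\top}\bigr\|$ vanishes exponentially at a rate governed by $|\lambda_{2}|<1$. Consequently, for every agent $i$ and every $x$, $F_{i}^{+}(t)(x)\to\tfrac{1}{n}\sum_{j\in\mathcal{V}}F_{j}^{+}(0)(x)=:G^{+}(x)$, with the error controlled uniformly in $x$ by $|\lambda_{2}|^{t}$ times a fixed multiple of the initial quantile values.

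Next I would identify the limiting object. The function $G^{+}$ is nondecreasing and left-continuous, hence is the quantile function of a unique $\mu^{\ast}\in\mathfrak{U}_{2}(\mathbb{R})$; finiteness of its second moment is inherited either because $\mu^{\ast}$ lies in $\mathrm{co}(\{\mu_{i}(0)\})$ by Corollary 1, or directly because $G^{+}\in L^{2}(0,1)$ as an average of the $F_{j}^{+}(0)\in L^{2}(0,1)$. The key classical fact, due to \cite{agueh2011barycenters} and already invoked implicitly in Proposition 3, is that in one dimension the $\ell_{2}$-barycentre of $\{\mu_{j}(0)\}$ with equal weights $\tfrac1n$ is precisely the measure whose quantile function is $\tfrac1n\sum_{j}F_{j}^{+}(0)=G^{+}$. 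Thus $\mu^{\ast}=\tfrac1n\,\argmin_{\eta\in\mathfrak{U}_{2}(\mathbb{R})}\sum_{j\in\mathcal{V}}\ell_{2}(\eta,\mu_{j}(0))^{2}$, matching the stated formula. Finally I would convert quantile convergence into $\ell_{2}$ convergence via the one-dimensional identity $\ell_{2}(\nu_{1},\nu_{2})^{2}=\int_{0}^{1}\bigl(F_{\nu_{1}}^{+}(u)-F_{\nu_{2}}^{+}(u)\bigr)^{2}\,\mathrm{d}u$, which gives
\[
\ell_{2}(\mu_{i}(t),\mu^{\ast})^{2}=\int_{0}^{1}\bigl(F_{i}^{+}(t)(u)-G^{+}(u)\bigr)^{2}\,\mathrm{d}u\leq\bigl\|\mathbf{W}^{t}-\tfrac{1}{n}\mathbf{1}\mathbf{1}^{\top}\bigr\|^{2}\int_{0}^{1}\bigl\|F^{+}(0)(u)\bigr\|^{2}\,\mathrm{d}u ,
\]
where the last integral is finite since each $\mu_{j}(0)\in\mathfrak{U}_{2}(\mathbb{R})$, and the operator-norm factor decays like $|\lambda_{2}|^{2t}$. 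Hence $\ell_{2}(\mu_{i}(t),\mu^{\ast})\to0$ exponentially for every $i$.

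The main obstacle is the passage from pointwise/operator convergence of $\mathbf{W}^{t}$ to convergence in the $\ell_{2}$ (i.e. $L^{2}(0,1)$-quantile) metric: this needs square-integrability of the initial quantile functions, which is exactly the $\mathfrak{U}_{2}$ hypothesis, together with the uniform-in-$x$ nature of the matrix convergence so the Fubini/dominated-convergence interchange above is legitimate. Everything else is bookkeeping on top of Proposition 3, Corollary 1, and the barycentre characterisation of \cite{agueh2011barycenters}.
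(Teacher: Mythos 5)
Your proposal is correct and follows essentially the same route as the paper's own (much terser) proof: both reduce the update to linear consensus on the stacked inverse cumulative distribution functions via (\ref{invcumupdate}), invoke average consensus for a primitive doubly stochastic $\mathbf{W}$, and identify the limiting quantile function $\tfrac{1}{n}\sum_{j}F_{j}^{+}(0)$ with the equally-weighted Wasserstein barycentre. Your version simply supplies the quantitative details the paper leaves implicit, namely the one-dimensional identity $\ell_{2}(\nu_{1},\nu_{2})^{2}=\int_{0}^{1}(F_{\nu_{1}}^{+}-F_{\nu_{2}}^{+})^{2}\,\mathrm{d}u$ and the spectral bound converting operator-norm decay of $\mathbf{W}^{t}-\tfrac{1}{n}\mathbf{1}\mathbf{1}^{\top}$ into exponential decay of the Wasserstein distance.
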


\begin{proof}
This result follows again because linear consensus in $\mathbb{R}$ over a time-invariant network with a doubly stochastic weighting matrix leads asymptotically to `average' consensus \cite{xiao2005scheme}. Looking at (\ref{invcumupdate}) we see that (nonlinear) consensus via (\ref{consensus3}) is related to (linear) consensus in the space of inverse cumulative distribution functions. Moving from the limiting inverse cumulative distribution function to a probability measure does not change the limiting $1/n$ averaging coefficient.
\end{proof}

This corollary provides sufficient conditions\footnote{A time-invariant network topology and a doubly stochastic weighting matrix. The time-invariance constraint can be relaxed (it is just sufficient) but we do not consider generalisation here.} under which exponential convergence to a measure is achieved and where the consensus measure achieved asymptotically at each agent is an average distance to all initial measures. In this case, as per Proposition \ref{convergedMatrixProp}, we have $\overline{\mathbf{W}} = \tfrac{1}{n}\mathbf{1}\mathbf{1}^{\top}$. We note that other consensus measures may be more desirable, e.g. one may want to reach an agreement on that measure with the smallest variance within the convex hull of all initial measures.

\subsection{Convergence with Gaussian Measures}

In this subsection we consider the flow of operation (\ref{consensus3}) when $\mu_{i}(0)$ is a Gaussian
measure. As in the preceding subsection, we consider only the case $(\mathfrak{U}_{2}(\mathbb{R}),\ell_{2})$.

Suppose $\mu_{i}(t)\in\mathfrak{U}_{2}(\mathbb{R})$ for all $i\in\mathcal{V}$ admits a Gaussian density of the form $\mathcal{N}({{p}}_{i},{{P}}_{i})$. Then it follows that $\mu_{i}(t+1)\in\mathfrak{U}_{2}(\mathbb{R})$ is a Gaussian measure \cite{mccann1997convexity,agueh2011barycenters} of density $\mathcal{N}(q,Q)$ where the updated mean and variance is given by
\begin{align*}
	q & = {\textstyle \sum_{j\in\mathcal{N}_{i}(t)}w_{ij}(t)\,p_{j}} \\
	Q & =\left({\textstyle \sum_{j\in\mathcal{N}_{i}(t)}w_{ij}(t)\,P_{j}^{1/2}}\right)^{2}
\end{align*}
One may find this result in the scalar case by studying (\ref{invcumupdate}) and noting in this case that,
\[
	F_{i}^{-}(t)(x) \,=\, p_i + P_i^{1/2}\,\sqrt{2}\,\mathrm{erf}^{-1}(2x-1)
\]
where $\mathrm{erf}(\cdot)$ is the standard error function. The (weighted) averaging in (\ref{invcumupdate}) taken point wise in $x$ is then just an averaging over the input means and standard deviations.

The following corollary then specialises those results in the preceding subsection to Gaussian measures.

\begin{corollary}
Consider a group of agents $\mathcal{V}$ and a connected time-invariant network $\mathcal{G}(\mathcal{V},\mathcal{E})$. Assume $\mathbf{W}$ is doubly stochastic and $\mu_{i}(0)\in\mathfrak{U}_{2}(\mathbb{R})$ admits a Gaussian density $\mathcal{N}(p_{i}(0),P_{i}(0))$. Then $\mu_{i}(t+1)\in\mathfrak{U}_{2}(\mathbb{R})$ in (\ref{consensus3}) admits a Gaussian density $\mathcal{N}(p_{i}(t+1),P_{i}(t+1))$ where
\begin{align*}
	p_{i}(t+1) & ={\textstyle \sum_{j\in\mathcal{N}_{i}}w_{ij}p_{j}(t),}\\
	P_{i}^{1/2}(t+1) & ={\textstyle \sum_{j\in\mathcal{N}_{i}}w_{ij}P_{j}^{1/2}(t).}
\end{align*}
Moreover we have for any $i\in\mathcal{V}$ $\lim_{t\rightarrow\infty}\ell_{2}(\mu_{i}(t),\mu^{\ast})=0$ exponentially fast where $\mu^{\ast}$ satisfies
\[
	\mu^{\ast}~=~\argmin_{\eta\in\mathfrak{U}_{2}(\mathbb{R})}~\frac{1}{n}~{\textstyle \sum_{j\in\mathcal{V}}\ell_{2}(\eta,\mu_{j}(0))^{2}.}
\]
\end{corollary}

This corollary collapses to a (classical) scalar average consensus algorithm \cite{saber:07} on the mean and standard deviation at each iteration. More general results achieving average consensus in this Gaussian setting that accommodate time-varying networks, finite-time convergence, etc. \cite{saber:04,ren2005consensus,saber:07,cortes2006finite} may be substituted.

Although the update (\ref{consensus3}) is linear (in mean and standard deviation) and closed in the event of Gaussian input measures, this is not generally true. The consensus problem (\ref{consensus3}) is, in general, inherently nonlinear.

\subsection{Computational Aspects of the Update Protocol}
In the case of Gaussian input measures, we have shown that the updating step of our consensus algorithm can be performed in closed form and that the resulting algorithm resembles a particular case of standard linear consensus in $\mathbb{R}$, e.g. see \cite{xiao2005scheme}.

Consider the important scenario where all the input initial measures are empirical measures\footnote{Interestingly, if each input measure is defined by a single Dirac (in $(\mathfrak{U}_{2}(\mathbb{R}),\ell_{2})$, i.e. with $p=2$), then the classical (linear) consensus algorithm in $\mathbb{R}$ is recovered, e.g. as in \cite{xiao2005scheme,moreau:05}. Of course, typically one is interested in more general empirical input measures.},
\[
	\mu_{i}^{N}(0)\left(dx\right)\,=\, \frac{1}{N}\,\sum_{j=1}^{N} \delta_{x_{j}^{i}(0)}\left(dx\right),
\]
where $\delta_{y}\left(dx\right)$ denotes the delta-Dirac measure located at $y$. In this case, the minimisation in (\ref{consensus3}) can be solved exactly. First we define the order statistics,
\[
	{x}_{1,*}^{i}(t) \,\leq\,{x}_{2,*}^{i}(t)\,\leq\, \ldots \,\leq\,{x}_{N,*}^{i}(t)
\]
corresponding to $\{x_{j}^{i}(t)\}$, $1\leq j\leq N$. Then with notation as in (\ref{consensus3}) we define,
\[
	\overline{x}^{\,i}_{k,*}(t)\,=\,\sum_{j\in\mathcal{N}_{i}(t)}w_{ij}(t)\,{x}_{k,*}^{\,j}(t)
\]
for all $1\leq k\leq N$. Then,
\[
	\mu_{i}^{N}(t+1)\left(dx\right)\,=\, \frac{1}{N}\,\sum_{j=1}^{N} \delta_{\overline{x}^{\,i}_{j,*}(t)}\left(dx\right),
\]
This computation involves only sorting and averaging of numbers in $\mathbb{R}$.

We note as an aside that for empirical measures in $\mathbb{R}^n$, for any integer $n\geq 1$, the minimization in (\ref{consensus3}) can still be solved exactly via a finite-dimensional linear program \cite{gangbo1998optimal,rabin2012wasserstein} and the resulting measure is again an empirical distribution\footnote{As discussed previously, for special, certainly non-generic, arrangements of discrete measures the minimisation in (\ref{consensus3}) may not have a unique solution (though a solution always exists) \cite{bonneel2015sliced}.}. However, the computational requirements of this linear program (in $\mathbb{R}^n$, when $n\geq2$) may explode quickly with the number of input measures and the number of atoms of these measures; see \cite{gangbo1998optimal,rabin2012wasserstein,bishop2014gossip,bishop2014information}. Numerous fast approximation methods have been derived for computing the barycenter update (\ref{consensus3}) itself \cite{boissard2011distribution,bonneel2015sliced,carlier2014numerical,cuturi2014fast}. The details of these algorithms are beyond the scope of this work, but it follows that the update (\ref{consensus3}) itself is thus computable with empirical measures in higher dimensions $\mathbb{R}^n$. Albeit not the focus of this article, we note in passing that convergence of the distributed protocol (\ref{consensus3}) with empirical measures on $\mathbb{R}^n$ follows readily from arguments on the convergence of Euclidean (linear) consensus \cite{moreau:05,bishop2014gossip}.

Consider now, more generally, arbitrary input measures on $\mathbb{R}$. The optimization problem (\ref{consensus3}) typically does not admit a closed form solution. However, it is convex \cite{agueh2011barycenters,bonneel2015sliced} and thus numerical methods/approximations are feasible and already exist in a number of cases; see \cite{bigot2012consistent,cuturi2014fast,bonneel2015sliced}. The issue of convergence under approximate update steps is unclear and beyond the scope of this work; but practically one would intuit that close approximation leads to close convergence. While convexity of the minimisation problem is advantageous in general, for measures on $\mathbb{R}$ there are yet further virtues. The update in (\ref{invcumupdate}) is typically computable in closed-form and thus we `almost' have a general closed-form expression for (\ref{consensus3}). The relationship between (\ref{consensus3}) and the inverse cumulative distribution in (\ref{invcumupdate}) is the basis for the solution with empirical measures on $\mathbb{R}$ given in the preceding paragraph. This relationship has also been explored in \cite{agueh2011barycenters,bigot2012consistent,bonneel2015sliced} with further example computations, and as a lead into more general computational results.

\section{Discussion and Applications}
\label{applications}

The output of each iteration of operation (\ref{consensus3}) is known in the literature as the Wasserstein barycenter. Similarly, the limit $\mu^{*}\in\mathfrak{U}_{p}(\mathbb{R})$ to which all agents converge upon repeated iteration of operation (\ref{consensus3}) is also a Wasserstein barycenter (on a fully connected graph of all agent's initial measures). In other words, this work studies the convergence properties of a consensus algorithm concerned with distributed (iterative) computation of the Wasserstein barycenter over a (possibly) time-varying, arbitrary, network topology. We consider undirected networks in this work for simplicity; but directed networks may be studied as in, e.g., \cite{moreau:05,ren2005consensus}, with additional conditions needed for convergence in that setting \cite{moreau:05}.

While this is the first such study in this direction, potential applications/uses for the Wasserstein barycenter (itself) have been considered previously
in a number of fields \cite{rabin2012wasserstein,bonneel2015sliced,carlier2014numerical,cuturi2014fast,srivastavawasp,bishop2014gossip,bishop2014information,arroyo2011smoothing,carlier2014numerical,boissard2011distribution} and this list is by no means exhaustive.

Arguably the most popular domain in which the Wasserstein barycenter has found applications is in computer vision and image/video processing
\cite{rabin2012wasserstein,bonneel2015sliced,carlier2014numerical}. We do not consider specifics here but the interested reader may consult
\cite{bonneel2015sliced} where numerous examples and a detailed discussion is given. It is noted \cite{bonneel2015sliced} that state-of-the-art advancements in a number of related problems have arisen via the use of Wasserstein barycenters. Importantly, both Gaussian and discrete measures find applicability through the Wasserstein barycenter in computer vision and image/video processing; again see \cite{bonneel2015sliced}.

Applications in machine learning and Bayesian statistics have also made use of the Wasserstein barycenter \cite{bigot2012consistent,cuturi2014fast,srivastavawasp} and it is envisioned that this technology (and the related optimal transportation problem) will find wider adoption in this field. In this setting, distributed (or even parallel) computation of the Wasserstein barycenter is likely important; e.g. distributed Bayesian computation on large data sets is the subject of \cite{srivastavawasp}.

Related work on consensus in spaces of probability distributions has been considered in the field of distributed estimation and information fusion. Suppose each agent starts with a (posterior) probability measure associated to some common underlying event of interest. Then one may like to combine all these measures (which amount to each agents estimate and/or belief of the underlying event) into a common probability measure that captures all the agents beliefs; this is often called opinion pooling. Related work in \cite{saber2005belief} considers the application of consensus to the problem of distributed Bayesian computations. In \cite{battistelli2014kullback,manuel2014distributed,bandyopadhyay2018distributed} the consensus algorithm from \cite{saber2005belief} is further extended and applied in distributed estimation and filtering. The Bayesian ideas in \cite{saber2005belief,battistelli2014kullback,manuel2014distributed,bandyopadhyay2018distributed} are related to so-called log-linear opinion pools which are related to the barycenter defined with respect to a Kullback-Leibler divergence (in an analogous fashion to the Wasserstein barycenter) \cite{bandyopadhyay2018distributed}. A Monte Carlo approximation of the consensus algorithm from \cite{saber2005belief} was studied in \cite{manuel2014distributed}. The log linear opinion pool for distributed information fusion was extended in \cite{taylor2019homogeneous,taylor2019consensus} to the general barycenter of a Bregman divergence (of which the Kullback-Leibler-based barycenter is a special case). Monte Carlo approximations were also considered in \cite{taylor2019homogeneous,taylor2019consensus}. While Bregman and Kullback-Leibler divergences have Bayesian-type interpretations, it is also possible to consider information fusion and distributed estimation in the context of Wasserstein barycenters as in \cite{bigot2012consistent,bishop2014gossip,bishop2014information}. As shown herein, and in \cite{bishop2014information}, information fusion with the Wasserstein barycenter has advantageous computability properties in the space of empirical distributions. Further study of the Wasserstein barycenter in the context of information fusion and estimation is an ongoing topic. 

We highlight finally that the ``consensus'' terminology throughout this article is used in the sense of network consensus and agreement as in \cite{degroot1974reaching,tsitsiklis:86,jadbabaie:03,saber:04,moreau:05,ren2005consensus,saber:07}. This differs from the topic of consensus (or ensemble) clustering or consensus aggregation \cite{vega2011survey,ghosh2015survey} which may also make use of the Wasserstein barycenter \cite{alvarez2018wide,verdinelli2019hybrid} for distributional clustering, etc. The latter topic may be an application of networked-type consensus as considered herein.

\section{Concluding Remarks}
\label{conclusion}

Distributed consensus in the Wasserstein metric space of probability measures was introduced in this paper. It is shown that convergence of the individual agents' measures to a common measure value is guaranteed if a relatively weak network connectivity condition is satisfied. The measure that is achieved asymptotically at each agent is the measure that minimises a weighted sum of its Wasserstein distances to these initial measures and is known as the Wasserstein barycenter in the literature.

Finally, we note that following \cite{moreau:05}, it would be straightforward to consider an extension to the case in which the network topology is directed and one expects analogous results (concerning connectivity) to apply in the Wasserstein space considered herein. For brevity, and notational simplicity, we do not explore this scenario further. Moreover, one may seek analogous results in the Wasserstein metric space of measures defined on the Borel sets of $(\mathbb{R}^{m},d)$ for some $m\geq2$. We conjecture that similar results hold in this case. However while many of the lemmas used herein carry over immediately, this generalization is not immediate. Indeed, the Wasserstein metric space in such cases is positively curved, so it does not resemble Euclidean space and it is not CAT(0).

{\footnotesize{}{}

 }
\end{document}